\documentclass[leqno,12pt]{article} 
\setlength{\textheight}{21cm}
\setlength{\textwidth}{16cm}
\setlength{\oddsidemargin}{0cm}
\setlength{\evensidemargin}{0cm}
\setlength{\topmargin}{0cm}
\usepackage{amsmath, amssymb}
\usepackage{amsthm} 
%
%
\theoremstyle{plain} 
\newtheorem{theorem}{\indent\sc Theorem}[section]

\newtheorem{corollary}[theorem]{\indent\sc Corollary}
\newtheorem{proposition}[theorem]{\indent\sc Proposition}

\theoremstyle{definition} 

\newtheorem{remark}[theorem]{\indent\sc Remark}
\newtheorem{example}[theorem]{\indent\sc Example}

%

%

\newcommand\on{\operatorname}
\renewcommand\div{\on{div}}
\newcommand\grad{\on{grad}}
\newcommand\Hess{\on{Hess}}
\newcommand\Ric{\on{Ric}}
\newcommand\scal{\on{scal}}

\makeatletter
%
\makeatother
%
\title{Geometric solitons in a
$D$-homothetically deformed Kenmotsu manifold}
%
\author{Adara M. Blaga}
\date{}
%
\pagestyle{myheadings}

\begin{document}

\maketitle

\markboth{{\small\it {\hspace{2cm} Geometric solitons in a
$D$-homothetically deformed Kenmotsu manifold}}}{\small\it{Geometric solitons in a
$D$-homothetically deformed Kenmotsu manifold
\hspace{2cm}}}

\footnote{ 
2020 \textit{Mathematics Subject Classification}.
35C08, 35Q51, 53B05.
}
\footnote{ 
\textit{Key words and phrases}.
Almost contact metric structure; $D$-homothetic deformation; Riemann soliton; Ricci soliton.
}

\begin{abstract}
We consider almost Riemann and almost Ricci solitons in a $D$-homothetically deformed Kenmotsu manifold having as potential vector field a gradient vector field, a solenoidal vector field or the Reeb vector field of the deformed structure, and explicitly obtain the Ricci and scalar curvatures for some cases. We also provide a lower bound for the Ricci curvature of the initial Kenmotsu manifold when the deformed manifold admits a gradient almost Riemann or almost Ricci soliton.
\end{abstract}

\bigskip

\section{Preliminaries}

Riemann and Ricci solitons are stationary solutions of the Riemann and Ricci flow, respectively \cite{ham}. In recent years, various geometric properties of these types of solitons have been studied from different points of view, by proving the existence and obstructions on curvature. More or less natural generalizations of these geometric solitons ($*$-Ricci solitons, $\eta$-Ricci solitons, generalized Ricci solitons, Ricci-Yamabe solitons, Ricci-Bourguignon solitons, Newton-Ricci solitons, Cotton solitons, Schouten solitons etc.) have been introduced on (pseudo)-Riemannian manifolds carrying an additional structure such as almost product, almost complex, almost contact, statistical structure etc. and treated the cases when the potential vector field of the soliton is pointwise collinear or orthogonal to the Reeb vector field of the structure, or when it is a conformal vector field.


In the present paper, we shall consider almost Riemann and almost Ricci solitons in a $D$-homothetically deformed Kenmotsu ma\-ni\-fold. $D$-homothetic deformations were introduced by Tanno \cite{ta} to obtain results on the second Betti numbers and harmonic forms. In almost contact metric geometry, a $D$-homothetic deformation preserves the property of a structure $(\phi,\xi,\eta,g)$ of being $K$-contact or Sasakian. The name derives form the fact that, the metrics
restricted to the contact distribution $D:=\ker (\eta)$, are homothetic.

After a short preliminary section, in Section 2 we give a brief description of the $D$-homothetically deformed Kenmotsu structure, deducing also the expressions of the Hessian, gradient, divergence and Laplace operators w.r.t. the deformed metric. We also provide the relation between the Hilbert-Schmidt norm w.r.t. the two metrics.
In Sections 3, 4 and 5 we consider almost Riemann and almost Ricci solitons in the deformed manifold having as potential vector field the deformed Reeb vector field, a gradient vector field, i.e. equal to the gradient of a smooth function, or a solenoidal vector field, i.e. divergence-free, and in the gradient Riemann and gradient Ricci case, we estimate the norm of the Ricci tensor field and for compact manifolds, we make some remarks on the volume of $M$. Notice that gradient vector fields and solenoidal vector fields often appear while modeling physical phenomena. Important examples of solenoidal vector fields are the magnetic vector fields, whose trajectories are magnetic curves. They describe the magnetic influence of electric charges in magnetic media, having practical applications. It's obvious that the harmonic functions provide examples of solenoidal vector fields of gradient type, namely, if $\Delta(f)=0$, then $\grad(f)$ is a solenoidal vector field.


\section{Deformed almost contact metric structures}

\textit{An almost contact metric manifold} is an odd dimensional Riemannian manifold $(M,g)$ with a $(1,1)$-tensor field $\phi$, a vector field $\xi$ (called the Reeb vector field) and a $1$-form $\eta$ satisfying \cite{sas}:
$$\phi^2=-I+\eta\otimes \xi, \ \ \eta(\xi)=1,$$
$$\eta=i_{\xi}g, \ \ g(\phi \cdot,\phi \cdot)=g-\eta\otimes \eta.$$

From the definition one gets:
$$\phi \xi=0, \ \ \eta\circ \phi=0, \ \ g(\phi \cdot, \cdot)=-g(\cdot, \phi \cdot).$$

An almost contact metric manifold $(M,\phi,\xi,\eta,g)$ is called \textit{Kenmotsu manifold} if for any $X$, $Y\in \mathfrak{X}(M)$,
the Levi-Civita connection $\nabla$ of $g$ satisfies
\begin{equation} \label{e2}
(\nabla _{X}\phi) Y=g(\phi X,Y) \xi
-\eta (Y)\phi X.
\end{equation}


Let $(M,\phi,\xi,\eta,g)$ be a $(2n+1)$-dimensional almost contact metric ma\-ni\-fold and denote by $D:=\ker(\eta)$ the contact distribution. For $a$ a positive constant, we define the $D$-homothetically deformation \cite{blair}:
$$\overline{\phi}=\phi, \ \ \overline{\xi}=\frac{1}{a}\xi, \ \ \overline{\eta}=a\eta, \ \ \overline{g}=ag+a(a-1)\eta\otimes \eta.$$
Then $(M,\overline{\phi},\overline{\xi},\overline{\eta},\overline{g})$ is also an almost contact metric manifold.

\bigskip

In the rest of the paper, we will consider $D$-homothetic deformations of a Kenmotsu manifold and we shall characterize some particular types of solitons in the deformed manifold, with a special view towards curvature.

For a $(2n+1)$-dimensional Kenmotsu manifold $(M,\phi,\xi,\eta,g)$, if we set $Y=\xi$ in (\ref{e2}), by a direct computation, we get:
\begin{equation}\label{kk}
\nabla\xi=I-\eta\otimes \xi,
\end{equation}
\begin{equation}\label{ll}
\pounds _{\xi}g=2(g-\eta\otimes \eta),
\end{equation}
$$\div(\xi)=2n.$$

Then the Levi-Civita connection $\overline{\nabla}$ of the deformed structure is
$$\overline{\nabla}=\nabla+\frac{a-1}{a}(g-\eta\otimes \eta)\otimes \xi,$$
whose Riemann curvature tensors of type $(1,3)$ and $(0,4)$, Ricci curvature tensor and scalar curvature are respectively given by
$$\overline{R}(X,Y)Z=R(X,Y)Z+\frac{a-1}{a}[g(\phi Y,\phi Z)X-g(\phi X,\phi Z)Y],$$
\begin{equation}\label{33}
\overline{R}(X,Y,Z,W)=aR(X,Y,Z,W)+(a-1)\{\eta(Z)[\eta(X)g(Y,W)-\eta(Y)g(X,W)]-\end{equation}$$-g(X,Z)[g(Y,W)-\eta(Y)\eta(W)]+g(Y,Z)[g(X,W)-\eta(X)\eta(W)]\},
$$
\begin{equation}\label{333}
\overline{\Ric}=\Ric+\frac{2n(a-1)}{a}(g-\eta\otimes \eta),
\end{equation}
\begin{equation}\label{s}
\overline{\scal}=\frac{1}{a}\scal+\frac{2n(2n+1)(a-1)}{a^2}.
\end{equation}

Moreover, by a direct computation, we get:
$$
(\overline{\nabla}_X\overline{\phi})Y=(\nabla_X\phi)Y-\frac{a-1}{a}g(\phi X,Y)\xi,
$$
$$\overline{\nabla}\overline{\xi}=\frac{1}{a}\nabla\xi,
$$
\begin{equation}\label{li}
\pounds _{\overline{\xi}}\overline{g}=\pounds _{\xi}g,
\end{equation}
$$
\overline{\div}(\overline{\xi})=\frac{1}{a}\div(\xi)
$$
and for any $f\in C^{\infty}(M)$, the Hessian, the gradient, the divergence and the Laplace operators w.r.t. $\overline{g}$ satisfy:
$$
\overline{{\Hess}}(f)={\Hess}(f)-\frac{a-1}{a}\xi(f)(\flat_g\circ \nabla \xi),
$$
$$
\overline{\grad}(f)=\frac{1}{a}\grad(f)-\frac{a-1}{a^2}\xi(f)\xi,
$$
$$
\overline{{\div}}=\div,
$$
$$
\overline{{\Delta}}(f)=\frac{1}{a}\Delta(f)-\frac{a-1}{a^2}\xi(f)\div(\xi)-\frac{a-1}{a^2}\xi(\xi(f)).
$$

Therefore

\begin{proposition}
In a $(2n+1)$-dimensional $D$-homothetically deformed Kenmotsu manifold, we have:
$$
(\overline{\nabla}_X\overline{\phi})Y=\frac{1}{a}g(\phi X,Y)\xi-\eta(Y)\phi X,
$$
$$
\overline{\nabla}\overline{\xi}=\frac{1}{a}(I-\eta\otimes \xi),
$$
\begin{equation}\label{l}
\pounds _{\overline{\xi}}\overline{g}=2(g-\eta\otimes \eta),
\end{equation}
\begin{equation}\label{di}
\overline{\div}(\overline{\xi})=\frac{2n}{a}.
\end{equation}

Moreover, for any $f\in C^{\infty}(M)$, the Hessian, the gradient, the divergence and the Laplace operators w.r.t. $\overline{g}$ satisfy:
\begin{equation}\label{he}
\overline{{\Hess}}(f)={\Hess}(f)-\frac{a-1}{a}\xi(f)(g-\eta\otimes \eta),\end{equation}
$$
\overline{\grad}(f)=\frac{1}{a}\grad(f)-\frac{a-1}{a^2}\xi(f)\xi,
$$
$$
\overline{{\div}}=\div,
$$
\begin{equation}\label{la}
\overline{{\Delta}}(f)=\frac{1}{a}\Delta(f)-\frac{2n(a-1)}{a^2}\xi(f)-\frac{a-1}{a^2}\xi(\xi(f)).
\end{equation}
\end{proposition}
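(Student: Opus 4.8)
The plan is to obtain every identity in the Proposition by specializing the general $D$-homothetic deformation formulas displayed immediately before the statement to the Kenmotsu case, i.e.\ by substituting the Kenmotsu-specific identities (\ref{e2}), (\ref{kk}), (\ref{ll}) and $\div(\xi)=2n$ into them. None of the deformation formulas themselves are re-derived; they are taken as given, and the proof consists entirely of feeding the Kenmotsu connection data into them and simplifying the resulting expressions in the constant $a$.

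First I would handle the structural tensors. For $(\overline{\nabla}_X\overline{\phi})Y$ I start from $(\overline{\nabla}_X\overline{\phi})Y=(\nabla_X\phi)Y-\frac{a-1}{a}g(\phi X,Y)\xi$ and insert (\ref{e2}); the two $g(\phi X,Y)\xi$ terms then combine with coefficient $1-\frac{a-1}{a}=\frac{1}{a}$, while the $-\eta(Y)\phi X$ term is unaffected, giving the claimed formula. For $\overline{\nabla}\overline{\xi}=\frac{1}{a}\nabla\xi$ I substitute (\ref{kk}) to obtain $\frac{1}{a}(I-\eta\otimes\xi)$. For $\pounds_{\overline{\xi}}\overline{g}$ I use the preceding identity (\ref{li}) together with (\ref{ll}), and for $\overline{\div}(\overline{\xi})=\frac{1}{a}\div(\xi)$ I use $\div(\xi)=2n$, producing (\ref{l}) and (\ref{di}) respectively.

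The one step requiring slightly more than literal substitution is the Hessian identity. Here I must evaluate the abstract operator $\flat_g\circ\nabla\xi$, that is, the $(0,2)$-tensor $(X,Y)\mapsto g(\nabla_X\xi,Y)$. Using (\ref{kk}) one has $\nabla_X\xi=X-\eta(X)\xi$, hence $g(\nabla_X\xi,Y)=g(X,Y)-\eta(X)\eta(Y)$, so that $\flat_g\circ\nabla\xi=g-\eta\otimes\eta$. Inserting this into $\overline{{\Hess}}(f)={\Hess}(f)-\frac{a-1}{a}\xi(f)(\flat_g\circ\nabla\xi)$ yields (\ref{he}). The gradient and divergence identities carry over unchanged, and the Laplace identity (\ref{la}) follows by substituting $\div(\xi)=2n$ into the preceding displayed formula, which collapses the middle term into $\frac{2n(a-1)}{a^2}\xi(f)$.

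There is no genuine obstacle: every assertion reduces to a one-line algebraic simplification once the Kenmotsu identities are available. The only point deserving care is the bookkeeping in the Hessian step, where the operator $\flat_g\circ\nabla\xi$ must be correctly identified with the symmetric tensor $g-\eta\otimes\eta$; all remaining work is arithmetic with the deformation constant $a$.
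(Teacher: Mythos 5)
Your proof is correct and matches the paper's own (implicit) argument: the paper derives this Proposition simply by the word ``Therefore'' after the general deformation formulas, i.e.\ exactly the substitution of the Kenmotsu identities (\ref{e2}), (\ref{kk}), (\ref{ll}) and $\div(\xi)=2n$ that you carry out, including the identification $\flat_g\circ\nabla\xi=g-\eta\otimes\eta$ needed for the Hessian formula.
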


\begin{remark}
From (\ref{la}), we notice that a $\Delta$-harmonic function $f$ is also $\overline{{\Delta}}$-harmonic if and only if $\Hess(f)(\xi,\xi)=-2n\eta(\grad(f))$.
\end{remark}

Let $\overline{E}_i=\frac{1}{\sqrt{a}}E_i$, for $i\in \{1,\dots, 2n\}$, and $\overline{\xi}=\frac{1}{a}\xi$ define two orthonormal basis w.r.t. $\overline{g}$ and $g$, respectively. Then for any symmetric $(0,2)$-tensor fields $T_1$ and $T_2$, computing the inner product w.r.t. $\overline{g}$ and $g$, we obtain:
\begin{equation}\label{pa}
\langle T_1,T_2\rangle_{\overline{g}}=\frac{1}{a^2}\langle T_1,T_2\rangle_{g}-\frac{a^2-1}{a^4}T_1(\xi,\xi)\cdot T_2(\xi,\xi),
\end{equation}
hence the Hilbert-Schmidt norms of a symmetric $(0,2)$-tensor field $T$ w.r.t. $\overline{g}$ and $g$ satisfy
\begin{equation}\label{par}
|T|_{\overline{g}}^2=\frac{1}{a^2}|T|_{g}^2-\frac{a^2-1}{a^4}[T(\xi,\xi)]^2
\end{equation}
and taking into account that
$$\langle g,g \rangle_g=2n+1, \ \ \langle g,\Ric \rangle_g=\scal, \ \ \langle g,\Hess(f) \rangle_g=\Delta(f), \ \ \langle g,\eta\otimes \eta \rangle_g= 1,$$
$$\langle \Ric, \Hess(f) \rangle_g=\sum_{i=1}^{2n}\Ric(\nabla_{E_i}\grad(f),E_i)+\Ric(\nabla_{\xi}\grad(f),\xi), $$
$$\langle \Ric, \eta\otimes \eta \rangle_g=-2n, \ \ \langle \Hess(f), \eta\otimes \eta \rangle_g=\xi(\xi(f)), \ \ \langle \eta\otimes \eta, \eta\otimes \eta \rangle_g=1,$$
by a direct computation, we obtain:
\begin{proposition}
In a $(2n+1)$-dimensional $D$-homothetically deformed Kenmotsu manifold, we have:
$$
\langle g,\Ric \rangle_{\overline{g}}=\frac{1}{a^2} \scal+\frac{2n(a^2-1)}{a^4},
$$
$$
\langle g,\Hess(f) \rangle_{\overline{g}}=\frac{1}{a^2} \Delta(f)-\frac{a^2-1}{a^4}\xi(\xi(f)),
$$
$$
\langle g,\eta\otimes \eta \rangle_{\overline{g}}=\frac{1}{a^4},
$$
$$
\langle \Ric,\Hess(f) \rangle_{\overline{g}}=\frac{1}{a^2} \langle \Ric,\Hess(f) \rangle_{g}+\frac{2n(a^2-1)}{a^4}\xi(\xi(f)),
$$
$$
\langle \Ric,\eta\otimes \eta \rangle_{\overline{g}}=-\frac{2n}{a^4},
$$
$$
\langle \Hess(f),\eta\otimes \eta \rangle_{\overline{g}}=\frac{1}{a^4}\xi(\xi(f)).
$$

Also, the Hilbert-Schmidt norms w.r.t. $\overline{g}$ and $g$ satisfy:
$$
|g|_{\overline{g}}^2=\frac{2na^2+1}{a^4},
$$
$$
|\Ric|_{\overline{g}}^2=\frac{1}{a^2}|\Ric|_{g}^2-\frac{4n^2(a^2-1)}{a^4},
$$
$$
|\Hess(f)|_{\overline{g}}^2=\frac{1}{a^2}|\Hess(f)|_{g}^2-\frac{a^2-1}{a^4}(\xi(\xi(f)))^2,
$$
$$
|\eta\otimes \eta|_{\overline{g}}^2=\frac{1}{a^4}.
$$
\end{proposition}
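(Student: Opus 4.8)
The plan is to reduce every assertion to a single mechanical substitution into the pairing identity (\ref{pa}) and its diagonal specialization (\ref{par}), using the eight $g$-inner products tabulated immediately before the statement. The only quantities appearing in (\ref{pa})--(\ref{par}) beyond those $g$-pairings are the Reeb evaluations $T_1(\xi,\xi)$ and $T_2(\xi,\xi)$, and the key observation is that these are themselves already on the list: for any symmetric $(0,2)$-tensor $T$ one has $\langle T,\eta\otimes\eta\rangle_g=T(\xi,\xi)$, since in the adapted orthonormal frame $\{E_1,\dots,E_{2n},\xi\}$ the tensor $\eta\otimes\eta$ is supported solely on the Reeb direction and $\eta(\xi)=1$. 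Hence
$$g(\xi,\xi)=1,\qquad \Ric(\xi,\xi)=-2n,\qquad \Hess(f)(\xi,\xi)=\xi(\xi(f)),\qquad (\eta\otimes\eta)(\xi,\xi)=1$$
can simply be read off from the tabulated pairings against $\eta\otimes\eta$.

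With these four values in hand, I would treat the six mixed inner products in turn. For each pair $(T_1,T_2)$ I insert the corresponding $g$-pairing and the two $(\xi,\xi)$-values into (\ref{pa}); for instance $\langle g,\Ric\rangle_{\overline{g}}=\frac{1}{a^2}\scal-\frac{a^2-1}{a^4}\cdot 1\cdot(-2n)$, which collapses to the stated expression after putting everything over $a^4$. The remaining pairings $\langle g,\Hess(f)\rangle$, $\langle g,\eta\otimes\eta\rangle$, $\langle \Ric,\Hess(f)\rangle$, $\langle \Ric,\eta\otimes\eta\rangle$ and $\langle \Hess(f),\eta\otimes\eta\rangle$ follow identically, each differing only in which listed $g$-pairing and which of the four Reeb values are fed in.

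For the four Hilbert-Schmidt norms I apply (\ref{par}) with $T$ ranging over $g$, $\Ric$, $\Hess(f)$ and $\eta\otimes\eta$, substituting $|g|_g^2=2n+1$, $|\eta\otimes\eta|_g^2=1$, and the squared Reeb values $[g(\xi,\xi)]^2=1$, $[\Ric(\xi,\xi)]^2=4n^2$, $[(\eta\otimes\eta)(\xi,\xi)]^2=1$; the norms $|\Ric|_g^2$ and $|\Hess(f)|_g^2$ remain as free quantities in the final formulas, exactly as stated.

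There is essentially no genuine obstacle here: the content of the proposition lies entirely in the previously established relations (\ref{pa}) and (\ref{par}), so what remains is arithmetic simplification. The one point deserving a word of justification is the identity $\langle T,\eta\otimes\eta\rangle_g=T(\xi,\xi)$ used to extract the Reeb evaluations, together with the fact that $\nabla_\xi\xi=0$ (immediate from (\ref{kk})), which underlies $\Hess(f)(\xi,\xi)=\xi(\xi(f))$; once these are noted, every displayed equality is obtained by clearing denominators.
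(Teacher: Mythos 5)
Your proposal is correct and follows exactly the paper's own route: the paper derives the proposition "by a direct computation" from the pairing identity (\ref{pa}), its diagonal case (\ref{par}), and the tabulated $g$-inner products, which is precisely your substitution scheme (your verification of all ten formulas checks out, e.g. $\langle g,\Ric\rangle_{\overline{g}}=\frac{1}{a^2}\scal+\frac{2n(a^2-1)}{a^4}$ from $\Ric(\xi,\xi)=-2n$). Your observation that the Reeb evaluations $T(\xi,\xi)$ coincide with the listed pairings $\langle T,\eta\otimes\eta\rangle_g$, together with noting $\nabla_\xi\xi=0$ behind $\Hess(f)(\xi,\xi)=\xi(\xi(f))$, is a nice explicit justification of a step the paper leaves implicit.
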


\begin{remark}
From the previous proposition, we deduce that in a $(2n+1)$-dimensional $D$-homothetically deformed Kenmotsu manifold, the Ricci curvature satisfies:
$$|\Ric|_{g}^2\geq \frac{4n^2(a^2-1)}{a^2}.$$

In particular, $a$ can not take any positive real value, precisely, if $|\Ric|_{g}^2<4n^2$, then $a\in \left(0, \frac{4n^2}{4n^2-|\Ric|_{g}^2}\right)$.
\end{remark}


In the next sections we shall characterize almost Riemann and almost Ricci solitons in a $D$-homothetically deformed Kenmotsu ma\-ni\-fold when the potential vector field of the soliton is the deformed Reeb vector field $\overline{\xi}$, a solenoidal or a gradient vector field.

\section{Almost Riemann solitons}

On a $(2n+1)$-dimensional smooth manifold $M$, a Riemannian metric $g$ and a non-vanishing vector field $V$ is said to define
\textit{a Riemann soliton} \cite{hi} if there exists a real constant $\lambda$ such that
\begin{equation}\label{2}
\frac{1}{2}\pounds _{V}g\odot g+R=\lambda G,
\end{equation}
where $G:=\frac{1}{2}g\odot g$, $\pounds _{V}$ denotes the Lie derivative operator in the direction of the vector field $V$ and $R$
is the Riemann curvature tensor.
If $\lambda$ is a smooth function on $M$, we call $(V,\lambda)$ an \textit{almost Riemann soliton}. Moreover, if $V$ is a gradient vector field, we call $(V,\lambda)$ a \textit{gradient almost Riemann soliton}. A Riemann soliton defined by $(V,\lambda)$ is said to be \textit{shrinking}, \textit{steady} or \textit{expanding} according as $\lambda$ is positive, zero or negative, respectively.

It was proved \cite{tripathi} that the Riemann and Ricci tensor fields of a $(2n+1)$-dimensional Kenmotsu manifold satisfy:
$$
R(X,Y)\xi=\eta(X)Y-\eta(Y)X
$$
and
$$\Ric(\xi,\xi)=-2n.$$

Since the Kulkarni-Nomizu product for two $(0,2)$-tensor fields $T_1$ and $T_2$ is defined by:
$$(T_1 \odot T_2)(X,Y,Z,W):=T_1(X,W)T_2(Y,Z)+T_1(Y,Z)T_2(X,W)-$$$$-T_1(X,Z)T_2(Y,W)-T_1(Y,W)T_2(X,Z),$$
for any $X,Y,Z,W\in \mathfrak{X} (M)$, then the Riemann soliton equation (\ref{2}) is explicitly expressed as
\begin{equation}\label{3}
2R(X,Y,Z,W)+[g(X,W)(\pounds _{V}g)(Y,Z)+g(Y,Z)(\pounds _{V}g)(X,W)-
\end{equation}
$$-g(X,Z)(\pounds _{V}g)(Y,W)-g(Y,W)(\pounds _{V}g)(X,Z)]=$$$$=2\lambda [g(X,W)g(Y,Z)-g(X,Z)g(Y,W)],$$
which by contraction over $X$ and $W$, gives
\begin{equation}\label{4}
\frac{1}{2}\pounds _{V}g+\frac{1}{2n-1}\Ric=\frac{2n\lambda-\div(V)}{2n-1}g
\end{equation}
and further
\begin{equation}\label{9}
\scal=2n[(2n+1)\lambda-2\div(V)].
\end{equation}

From (\ref{33}) and (\ref{3}), we obtain:

\begin{proposition}
If $(\overline{\xi},\overline{\lambda})$ defines an almost Riemann soliton in a $D$-homo\-the\-ti\-cally deformed Kenmotsu manifold $(M,\overline{\phi},\overline{\xi},\overline{\eta},\overline{g})$, then:
$$R(X,Y,Z,W)=-2[g(X,W)g(Y,Z)-g(X,Z)g(Y,W)]+$$$$
+g(X,W)\eta(Y)\eta(Z)-g(X,Z)\eta(Y)\eta(W)+$$$$+g(Y,Z)\eta(X)\eta(W)-g(Y,W)\eta(X)\eta(Z),$$
for any $X,Y,Z,W\in \mathfrak{X}(M)$.
\end{proposition}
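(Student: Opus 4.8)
The plan is to write the almost Riemann soliton equation for the deformed data $(\overline{\xi},\overline{\lambda})$ in its explicit Kulkarni--Nomizu form, namely the exact analogue of (\ref{3}) with every object barred, and then to substitute the three quantities already computed in the preliminaries: the Lie derivative $\pounds_{\overline{\xi}}\overline{g}=2(g-\eta\otimes\eta)$ from (\ref{l}), the defining relation $\overline{g}=ag+a(a-1)\eta\otimes\eta$, and the $(0,4)$ curvature tensor $\overline{R}$ from (\ref{33}). After this substitution the only tensors that survive are products of two copies of $g$ and $\eta\otimes\eta$, so the identity organizes itself on the two ``building blocks''
\[
A_1:=g(X,W)g(Y,Z)-g(X,Z)g(Y,W),
\]
\[
A_2:=g(X,W)\eta(Y)\eta(Z)+g(Y,Z)\eta(X)\eta(W)-g(X,Z)\eta(Y)\eta(W)-g(Y,W)\eta(X)\eta(Z).
\]
A useful sanity check along the way is that the purely quartic terms $\eta(X)\eta(Y)\eta(Z)\eta(W)$ contributed by the three ingredients all cancel.

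Carrying out the expansion and collecting coefficients, I expect the whole equation to reduce to the single $(0,4)$ identity
\[
2aR+(6a-2)A_1+(2a^2-6a+2)A_2=2\overline{\lambda}a^2A_1+2\overline{\lambda}a^2(a-1)A_2,
\]
relating $R$, the two blocks, and the still-unknown soliton function $\overline{\lambda}$. Since one tensor identity cannot by itself separate $R$ from $\overline{\lambda}$, the next step -- and the only place where the Kenmotsu hypothesis is genuinely used -- is to evaluate at $Z=\xi$. Using $\eta(\xi)=1$ and $g(\cdot,\xi)=\eta(\cdot)$ one sees that both $A_1$ and $A_2$ degenerate to the same tensor $g(X,W)\eta(Y)-g(Y,W)\eta(X)$, while the curvature term is pinned down by the recalled identity $R(X,Y)\xi=\eta(X)Y-\eta(Y)X$, i.e. $R(X,Y,\xi,W)=\eta(X)g(Y,W)-\eta(Y)g(X,W)$. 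Matching the scalar coefficients of this common tensor forces $\overline{\lambda}=\frac{a-1}{a^2}$.

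Finally I would substitute $\overline{\lambda}=\frac{a-1}{a^2}$ back into the reduced identity; its right-hand side becomes $2(a-1)A_1+2(a-1)^2A_2$, and solving for $R$ the coefficients simplify to $-2$ on $A_1$ and $+1$ on $A_2$, which is exactly the claimed expression $R=-2A_1+A_2$. The main obstacle is not conceptual but the disciplined four-index bookkeeping in the expansion; the cancellation of the quartic $\eta$-terms and the degeneracy of $A_1$ and $A_2$ at $Z=\xi$ are the two structural features that keep the computation tractable and that signal the Kenmotsu curvature identity is precisely the extra input needed to determine $\overline{\lambda}$.
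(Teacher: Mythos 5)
Your proposal is correct, and the computations check out: the reduced identity $2aR+(6a-2)A_1+(2a^2-6a+2)A_2=2\overline{\lambda}a^2A_1+2\overline{\lambda}a^2(a-1)A_2$, the cancellation of the quartic $\eta$-terms, and the final coefficients $-2$ and $+1$ are all accurate, and contracting your formula for $R$ reproduces the paper's subsequent $\Ric=-(4n-1)g+(2n-1)\eta\otimes\eta$. This is essentially the paper's route (the paper simply says the Proposition follows ``from (\ref{33}) and (\ref{3})'', i.e.\ the same substitution of $\overline{R}$, $\pounds_{\overline{\xi}}\overline{g}$ and $\overline{g}$ into the barred Kulkarni--Nomizu equation); the one place you genuinely diverge is in eliminating $\overline{\lambda}$: you pin it down pointwise by evaluating at $Z=\xi$ and invoking $R(X,Y)\xi=\eta(X)Y-\eta(Y)X$, whereas the paper obtains $\overline{\lambda}=\frac{a-1}{a^2}$ by tracing the contracted soliton equations (this is how Theorem \ref{p2} proceeds). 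Your variant has the small advantage of keeping the argument at the level of the full $(0,4)$ identity and making explicit that the Kenmotsu curvature identity is exactly the input that determines $\overline{\lambda}$, at the cost of needing the observation that $A_1$ and $A_2$ degenerate to the same nonzero tensor at $Z=\xi$; the trace route is shorter but defers the determination of $\overline{\lambda}$ to the contracted (Ricci/scalar) equations.
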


\begin{theorem}\label{p2}
If $(\overline{\xi},\overline{\lambda})$ defines an almost Riemann soliton in a $(2n+1)$-dimensional $D$-homothetically deformed Kenmotsu manifold $(M,\overline{\phi},\overline{\xi},\overline{\eta},\overline{g})$, then:
$$
\overline{\lambda}=\frac{a-1}{a^2},
$$
\begin{equation}\label{r1}
\Ric=-(4n-1)g+(2n-1)\eta\otimes \eta,
\end{equation}
\begin{equation}\label{s1}
\scal=-8n^2.
\end{equation}
\end{theorem}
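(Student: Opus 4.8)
The plan is to treat the preceding Proposition as the engine of the whole argument: under the stated hypothesis it already pins down the Riemann curvature tensor $R$ of the underlying Kenmotsu manifold in closed form, so the three assertions are obtained from it by successive tracing, with the value of $\overline{\lambda}$ read off from the scalar form of the soliton identity at the end. Although the statement lists $\overline{\lambda}$ first, it is cleaner to compute $\Ric$ and $\scal$ before it, since the value of $\overline{\lambda}$ will depend on $\scal$.

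First I would compute $\Ric$ by contracting the curvature expression supplied by the Proposition over a $g$-orthonormal frame $\{e_A\}_{A=1}^{2n+1}$, using the convention $\Ric(Y,Z)=\sum_A R(e_A,Y,Z,e_A)$ consistent with the contraction already performed to pass from (\ref{3}) to (\ref{4}). The relevant frame identities are $\sum_A g(e_A,e_A)=2n+1$, $\sum_A g(e_A,Z)g(Y,e_A)=g(Y,Z)$, $\sum_A \eta(e_A)g(e_A,Y)=\eta(Y)$ and $\sum_A \eta(e_A)^2=1$, the last because $\eta=i_{\xi}g$ and $g(\xi,\xi)=1$. The $g\odot g$ part of $R$ then contributes $-4n\,g$, while the four $\eta$-dependent terms collapse, after cancellation, to $(2n-1)\eta\otimes\eta$ together with an extra $+g$, giving exactly $\Ric=-(4n-1)g+(2n-1)\eta\otimes\eta$, which is (\ref{r1}). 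Tracing once more, $\scal=\sum_A\Ric(e_A,e_A)=-(4n-1)(2n+1)+(2n-1)$, and a one-line simplification yields $\scal=-8n^2$, which is (\ref{s1}).

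It remains to identify $\overline{\lambda}$. Here I would invoke the scalar soliton identity (\ref{9}) in the deformed manifold, namely $\overline{\scal}=2n[(2n+1)\overline{\lambda}-2\,\overline{\div}(\overline{\xi})]$; this is legitimate because (\ref{9}) was obtained purely formally from the soliton equation and hence holds verbatim for the barred data. Substituting $\overline{\div}(\overline{\xi})=\frac{2n}{a}$ from (\ref{di}) and $\overline{\scal}=\frac{1}{a}\scal+\frac{2n(2n+1)(a-1)}{a^2}$ from (\ref{s}), then inserting the value $\scal=-8n^2$ just found, the two copies of $-8n^2/a$ cancel and the remaining relation reduces to $2n(2n+1)\overline{\lambda}=\frac{2n(2n+1)(a-1)}{a^2}$, whence $\overline{\lambda}=\frac{a-1}{a^2}$. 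The computation is essentially routine once the Proposition is in hand; the only point demanding care is the bookkeeping in the $\Ric$ contraction, where the mixed traces $\sum_A g(e_A,Z)\eta(e_A)$ must be recognized to return $\eta(Z)$ rather than to vanish, so that the coefficient of $\eta\otimes\eta$ lands on $2n-1$ and not on $2n+1$. I expect no genuine obstacle beyond this sign-and-coefficient accounting.
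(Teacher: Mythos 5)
Your proposal is correct, but it follows a genuinely different route from the paper's own proof. Your computations all check out: over a $g$-orthonormal frame $\{e_A\}$, with the contraction convention $\Ric(Y,Z)=\sum_{A}R(e_A,Y,Z,e_A)$ (the one consistent with passing from (\ref{3}) to (\ref{4})), tracing the Proposition's curvature formula gives $\Ric=-(4n-1)g+(2n-1)\eta\otimes\eta$, a second trace gives $\scal=-(4n-1)(2n+1)+(2n-1)=-8n^2$, and (\ref{9}) written for the barred data, combined with (\ref{s}) and (\ref{di}), collapses to $2n(2n+1)\overline{\lambda}=\frac{2n(2n+1)(a-1)}{a^2}$, i.e. $\overline{\lambda}=\frac{a-1}{a^2}$. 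The paper, by contrast, never invokes the Proposition: it equates (\ref{333}) with the once-traced soliton identity (\ref{4}) written for $(\overline{g},\overline{\xi})$, using (\ref{l}) and (\ref{di}), so that $\Ric$ appears as a combination of $g$ and $\eta\otimes\eta$ whose coefficients still contain the unknown $\overline{\lambda}$ (its equation (\ref{B})); it then gets a second scalar relation (\ref{C}) from (\ref{9}) and (\ref{s}), traces (\ref{B}), and solves the resulting linear equation for $\overline{\lambda}$, substituting back to obtain (\ref{r1}) and (\ref{s1}). Your order is inverted: since the Proposition has already eliminated both $\overline{\lambda}$ and $a$ from the curvature, $\Ric$ and $\scal$ come out by pure contraction with no unknowns, and $\overline{\lambda}$ is read off at the very end from a single scalar identity. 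What you gain is directness, with no linear system in $\overline{\lambda}$ to solve; what you give up is self-containedness, since your argument is exactly as strong as the Proposition, whose proof the paper compresses into ``From (\ref{33}) and (\ref{3}), we obtain'' and whose derivation in fact already forces the value of $\overline{\lambda}$ (evaluating the untraced soliton equation at $Z=\xi$ and using the Kenmotsu identity $R(X,Y)\xi=\eta(X)Y-\eta(Y)X$ yields $a^{2}\overline{\lambda}-a=-1$). Because the Proposition precedes Theorem \ref{p2} in the paper, there is no circularity in citing it; but a reader checking your proof must still verify the Proposition separately, whereas the paper's proof rests only on the general traced identities and the deformation formulas (\ref{333}), (\ref{s}), (\ref{l}), (\ref{di}).
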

\begin{proof}
Equating (\ref{333}) and (\ref{4}) and using (\ref{l}) and (\ref{di}), we get:
\begin{equation}\label{B}
\Ric=\left[2na\overline{\lambda}-(4n-1)-2n\frac{a-1}{a}\right]g+\end{equation}$$+\left[2na(a-1)\overline{\lambda}+(4n-1-2na)+2n\frac{a-1}{a}\right]\eta\otimes \eta.$$

Also, by equating (\ref{s}) and (\ref{4}) and using (\ref{di}), we obtain:
\begin{equation}\label{C}
\scal=2n(2n+1)a\overline{\lambda}-8n^2-2n(2n+1)\frac{a-1}{a}.
\end{equation}

Now, tracing (\ref{B}) and considering (\ref{C}), we get:
$$\overline{\lambda}=\frac{1}{a}-\frac{1}{a^2}$$
which, by replacing it in (\ref{B}) and (\ref{C}), gives (\ref{r1}) and (\ref{s1}).
\end{proof}

\begin{remark}
Under the hypotheses of Theorem \ref{p2}, we get $$|\Ric|^2=2n(16n^2-6n+1).$$
\end{remark}

\begin{proposition}
If $(\overline{\xi},\overline{\lambda})$ defines an almost Riemann soliton in $(M,\overline{\phi},\overline{\xi},\overline{\eta},\overline{g})$, then $(\xi,\lambda)$ defines an almost Riemann soliton in $(M,\phi,\xi,\eta,g)$ if and only if $\lambda=0$.
\end{proposition}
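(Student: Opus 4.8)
The plan is to exploit the fact that the hypothesis already pins down the full Riemann curvature tensor of the underlying Kenmotsu manifold, so that the question becomes a purely algebraic verification. Indeed, by the Proposition immediately preceding Theorem \ref{p2}, the assumption that $(\overline{\xi},\overline{\lambda})$ is an almost Riemann soliton in the deformed structure forces
$$R(X,Y,Z,W)=-2[g(X,W)g(Y,Z)-g(X,Z)g(Y,W)]+g(X,W)\eta(Y)\eta(Z)-g(X,Z)\eta(Y)\eta(W)+g(Y,Z)\eta(X)\eta(W)-g(Y,W)\eta(X)\eta(Z)$$
for all $X,Y,Z,W\in\mathfrak{X}(M)$. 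This explicit expression is the only input I would carry over from the deformed side.

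Next I would write down the defining equation of an almost Riemann soliton for the original structure with potential field $\xi$, namely (\ref{3}) with $V=\xi$, and substitute $\pounds_{\xi}g=2(g-\eta\otimes\eta)$ from (\ref{ll}). This turns the left-hand side of (\ref{3}) into $2R(X,Y,Z,W)$ plus a combination of pure metric products and mixed $(\eta\otimes\eta)$-products arising from the four Lie-derivative brackets, each carrying an explicit factor of $2$.

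The key step is then a direct expansion: insert the explicit $R$ above and collect terms. I expect the six scalar quantities $g(X,W)g(Y,Z)$, $g(X,Z)g(Y,W)$ and the four mixed products $g(X,W)\eta(Y)\eta(Z)$, $g(X,Z)\eta(Y)\eta(W)$, $g(Y,Z)\eta(X)\eta(W)$, $g(Y,W)\eta(X)\eta(Z)$ to cancel pairwise, so that the entire left-hand side of (\ref{3}) reduces \emph{identically} to zero. The defining equation then collapses to
$$0=2\lambda[g(X,W)g(Y,Z)-g(X,Z)g(Y,W)].$$
Since the tensor $g(X,W)g(Y,Z)-g(X,Z)g(Y,W)$ does not vanish identically on a manifold of dimension $2n+1\geq 3$, this identity holds for all $X,Y,Z,W$ precisely when $\lambda=0$. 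Because the left-hand side is zero regardless of $\lambda$, both implications of the equivalence follow at once from this single computation.

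The only real obstacle is the bookkeeping in the cancellation step: one must track the two pure metric products and the four mixed $\eta$-products simultaneously and check that every coefficient, including the factors of $2$ carried by $\pounds_{\xi}g$, matches so that the sum vanishes. No further curvature identity or contraction is needed, so once this algebra is in place the equivalence is immediate.
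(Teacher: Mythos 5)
Your proof is correct and follows what is evidently the paper's intended route: the paper states this proposition without proof, and it is meant to follow exactly as you argue, by inserting the curvature expression from the preceding Proposition together with $\pounds_{\xi}g=2(g-\eta\otimes\eta)$ from (\ref{ll}) into the soliton equation (\ref{3}), whereupon the left-hand side vanishes identically and the equation reduces to $0=2\lambda[g(X,W)g(Y,Z)-g(X,Z)g(Y,W)]$. Your cancellation bookkeeping checks out, and verifying the full tensor equation (rather than only its trace (\ref{4})) is precisely what makes both directions of the equivalence follow at once.
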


\begin{theorem}\label{hh}
If $(V,\overline{\lambda})$ defines an almost Riemann soliton of solenoidal type in a $(2n+1)$-dimensional $D$-homothetically deformed Kenmotsu manifold $(M,\overline{\phi},\overline{\xi},\overline{\eta},\overline{g})$, then:
$$
\overline{\lambda}=\frac{2n-1}{2n}\xi(\eta(V))-\frac{1}{a^2},
$$
\begin{equation}\label{r2}
\Ric=[(2n-1)a\xi(\eta(V))-2n]g(X,Y)+(2n-1)a(a-1)\xi(\eta(V))\eta(X)\eta(Y)-\end{equation}
$$-\frac{(2n-1)a}{2}[g(\nabla_XV,Y)+g(\nabla_YV,X)]-$$$$-
\frac{(2n-1)a(a-1)}{2}\{\eta(X)[\eta(\nabla_YV)+g(Y,V)]+\eta(Y)[\eta(\nabla_XV)+g(X,V)]-$$$$-2\eta(X)\eta(Y)\eta(V)\},
$$
\begin{equation}\label{s2}
\scal=(2n+1)[(2n-1)a\xi(\eta(V))-2n].
\end{equation}
\end{theorem}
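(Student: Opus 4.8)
The plan is to transport the general soliton identities \eqref{4} and \eqref{9} to the deformed structure and then translate everything back into the original Kenmotsu data by means of \eqref{333} and \eqref{s}. First I would observe that the soliton $(V,\overline{\lambda})$ lives on $(M,\overline{\phi},\overline{\xi},\overline{\eta},\overline{g})$, so the contractions that led from the soliton equation to \eqref{4} and \eqref{9} apply verbatim with every object barred, yielding
\[
\tfrac{1}{2}\pounds_V\overline{g}+\tfrac{1}{2n-1}\overline{\Ric}=\tfrac{2n\overline{\lambda}-\overline{\div}(V)}{2n-1}\,\overline{g},\qquad \overline{\scal}=2n[(2n+1)\overline{\lambda}-2\,\overline{\div}(V)].
\]
Since $V$ is solenoidal and $\overline{\div}=\div$, both divergence terms vanish.

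Next I would compute $\pounds_V\overline{g}$. Using $\overline{g}=ag+a(a-1)\eta\otimes\eta$ and the Leibniz rule,
\[
\pounds_V\overline{g}=a\,\pounds_Vg+a(a-1)\big[(\pounds_V\eta)\otimes\eta+\eta\otimes(\pounds_V\eta)\big],
\]
where the Kenmotsu identity \eqref{kk}, namely $\nabla_X\xi=X-\eta(X)\xi$, gives $(\pounds_V\eta)(X)=g(X,V)-\eta(V)\eta(X)+\eta(\nabla_XV)$. Substituting $\overline{\Ric}$ from \eqref{333} into the first barred identity and solving for $\Ric$ produces an expression of the shape $\Ric=2n\overline{\lambda}\,\overline{g}-\tfrac{2n-1}{2}\pounds_V\overline{g}-\tfrac{2n(a-1)}{a}(g-\eta\otimes\eta)$, which, once the two formulas above are inserted, already displays the tensorial structure of \eqref{r2}, with $\overline{\lambda}$ still free.

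To pin down $\overline{\lambda}$ I would take a trace. Computing the $g$-trace of the last relation, and using $\tfrac{1}{2}\pounds_Vg$ has $g$-trace $\div(V)=0$ together with $\xi(\eta(V))=\eta(\nabla_\xi V)$ (which holds because $\nabla_\xi\xi=0$ by \eqref{kk}), gives one expression for $\scal$ in terms of $\overline{\lambda}$ and $\xi(\eta(V))$. A second expression for $\scal$ comes from equating the barred \eqref{9} with \eqref{s}. Equating the two, every surviving term carries a common factor $(a-1)$; dividing by it in the genuinely deformed case $a\neq1$ leaves a linear equation whose solution is exactly $\overline{\lambda}=\tfrac{2n-1}{2n}\xi(\eta(V))-\tfrac{1}{a^2}$. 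Back-substituting this value into the scalar relation yields \eqref{s2}, and into the Ricci relation yields \eqref{r2}.

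The main obstacle I expect is the bookkeeping inside $\pounds_V\overline{g}$: correctly extracting $\pounds_V\eta$ from the Kenmotsu connection, recognizing the recurring combination $\eta(\nabla_XV)+g(X,V)$ that organizes the braces of \eqref{r2}, and verifying that the coefficient of the pure $\eta\otimes\eta$ part collapses cleanly. Indeed the contributions there (from $2n\overline{\lambda}\,a(a-1)$, from $-\tfrac{2n(a-1)}{a}$, and from the $-2\eta(V)$ piece of the Lie derivative) must cancel down to the single coefficient $(2n-1)a(a-1)\xi(\eta(V))$; once this and the analogous cancellation for the $g(X,Y)$ coefficient down to $(2n-1)a\xi(\eta(V))-2n$ are checked, the remaining scalar computations are routine.
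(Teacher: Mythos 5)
Your proposal is correct and follows essentially the same route as the paper's own proof: write the contracted soliton identities (\ref{4}) and (\ref{9}) with barred quantities, expand $\pounds_V\overline{g}$ via the Kenmotsu identity $\nabla\xi=I-\eta\otimes\xi$, substitute $\overline{\Ric}$ and $\overline{\scal}$ from (\ref{333}) and (\ref{s}) to obtain the analogues of (\ref{B1}) and (\ref{C1}), then trace and solve for $\overline{\lambda}$ before back-substituting. If anything, you are slightly more careful than the paper, since you flag explicitly that solving for $\overline{\lambda}$ requires dividing by the common factor $(a-1)$, i.e.\ the genuinely deformed case $a\neq 1$, a hypothesis the paper uses tacitly.
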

\begin{proof}
Equating (\ref{333}) and (\ref{4}) and using $\div(V)=0$, we get:
\begin{equation}\label{B1}
\Ric(X,Y)=\left(2na\overline{\lambda}-2n\frac{a-1}{a}\right)g(X,Y)+\left[2na(a-1)\overline{\lambda}+2n\frac{a-1}{a}\right]\eta(X)\eta(Y)-\end{equation}
$$-(2n-1)\frac{a(a-1)}{2}\{\eta(X)[\eta(\nabla_YV)+g(Y,V)]+
\eta(Y)[\eta(\nabla_XV)+g(X,V)]-$$
$$-2\eta(X)\eta(Y)\eta(V)\}-(2n-1)\frac{a}{2}[g(\nabla_XV,Y)+g(\nabla_YV,X)].$$

Also, by equating (\ref{s}) and (\ref{9}), we obtain:
\begin{equation}\label{C1}
\scal=2n(2n+1)a\overline{\lambda}-2n(2n+1)\frac{a-1}{a}.
\end{equation}

Now, tracing (\ref{B1}) and considering (\ref{C1}), we get:
$$\overline{\lambda}=\frac{2n-1}{2n}\xi(\eta(V))-\frac{1}{a^2}$$
which, by replacing it in (\ref{B1}) and (\ref{C1}), gives (\ref{r2}) and (\ref{s2}).
\end{proof}

\begin{corollary}
Under the hypotheses of Theorem \ref{hh}, if $V$ is $g$-orthogonal to $\xi$, then $(M,g)$ is of negative constant scalar curvature and $(V,\overline{\lambda})$ is an expanding Riemann soliton.
\end{corollary}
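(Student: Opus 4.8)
The plan is to specialize the conclusions of Theorem \ref{hh} under the additional hypothesis that $V$ is $g$-orthogonal to $\xi$. The first step is to rephrase this orthogonality in terms of $\eta$: since $\eta=i_\xi g$, we have $\eta(V)=g(\xi,V)$ at every point, so the assumption $g(\xi,V)=0$ is precisely the statement that $\eta(V)=0$ as an identity of functions on $M$.

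The second step is to observe that, because $\eta(V)$ is the zero function on all of $M$, its derivative in any direction vanishes; in particular $\xi(\eta(V))=0$. This is the only place where some care is required: the orthogonality must be assumed globally, so that $\eta(V)$ is identically zero and its $\xi$-derivative is genuinely zero, and not merely orthogonality at an isolated point.

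With $\xi(\eta(V))=0$ in hand, I would substitute directly into the formulas furnished by Theorem \ref{hh}. The expression for $\overline{\lambda}$ reduces to $\overline{\lambda}=-\frac{1}{a^2}$; since $a>0$ this is a strictly negative \emph{constant}, so the almost Riemann soliton is in fact a genuine Riemann soliton, and by the sign convention recalled after (\ref{2}) it is expanding. Likewise, substituting into (\ref{s2}) yields $\scal=(2n+1)(-2n)=-2n(2n+1)$, a negative constant, which gives the asserted negative constant scalar curvature of $(M,g)$. There is no genuine obstacle in this corollary; its entire content is the translation of pointwise $g$-orthogonality into the vanishing of the function $\eta(V)$, whence the vanishing of $\xi(\eta(V))$, after which the result is immediate.
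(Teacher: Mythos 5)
Your proposal is correct and follows exactly the argument the paper intends (the corollary is stated without explicit proof, being an immediate specialization of Theorem \ref{hh}): $g$-orthogonality gives $\eta(V)=g(V,\xi)\equiv 0$, hence $\xi(\eta(V))=0$, and substitution into the formulas of Theorem \ref{hh} yields $\overline{\lambda}=-\frac{1}{a^2}<0$ (a negative constant, so the soliton is a genuine, expanding Riemann soliton) and $\scal=-2n(2n+1)<0$ (a negative constant). Your remark that the orthogonality must hold identically on $M$, so that the derivative $\xi(\eta(V))$ actually vanishes, is exactly the right point of care.
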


\begin{theorem}\label{k}
If $(V=\overline{\grad}(f),\overline{\lambda})$ defines a gradient almost Riemann soliton in a $(2n+1)$-dimensional $D$-homothetically deformed Kenmotsu manifold $(M,\overline{\phi},\overline{\xi},\overline{\eta},\overline{g})$, then:
\begin{equation}\label{lam}
\overline{\lambda}=\frac{1}{2na}{\Delta}(f)-\frac{a-1}{a^2}\eta(\grad(f))+\frac{2n-a}{2na^2}\Hess(f)(\xi,\xi)-\frac{1}{a^2}.
\end{equation}

\end{theorem}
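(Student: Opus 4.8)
The plan is to follow the same scheme as in the proofs of Theorems \ref{p2} and \ref{hh}: translate the Riemann soliton condition into a relation for the Ricci tensor of the deformed metric, and then extract a scalar identity that pins down $\overline{\lambda}$. Since $V=\overline{\grad}(f)$, the two ingredients that make the gradient case special are the standard identities $\frac{1}{2}\pounds_{\overline{\grad}(f)}\overline{g}=\overline{\Hess}(f)$ and $\overline{\div}(\overline{\grad}(f))=\overline{\Delta}(f)$. I would substitute these into the contracted soliton equation (\ref{4}) written for the deformed structure and solve for the deformed Ricci tensor, obtaining
\begin{equation}\label{planRic}
\overline{\Ric}=\bigl(2n\overline{\lambda}-\overline{\Delta}(f)\bigr)\overline{g}-(2n-1)\overline{\Hess}(f).
\end{equation}

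The key point is that one cannot extract $\overline{\lambda}$ by tracing (\ref{planRic}) with respect to $\overline{g}$: that trace merely reproduces the scalar identity (\ref{9}), since (\ref{9}) is itself the $\overline{g}$-trace of (\ref{4}). Independent information is therefore required, and I would supply it by evaluating (\ref{planRic}) along the Reeb direction. Using the Kenmotsu identity $\Ric(\xi,\xi)=-2n$ together with the Ricci transformation (\ref{333}) gives $\overline{\Ric}(\xi,\xi)=-2n$, because $g-\eta\otimes\eta$ annihilates the pair $(\xi,\xi)$; for the same reason (\ref{he}) yields $\overline{\Hess}(f)(\xi,\xi)=\Hess(f)(\xi,\xi)$, while $\overline{g}(\xi,\xi)=a^2$. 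Feeding these three values into (\ref{planRic}) collapses it to the single scalar equation
\begin{equation}\label{planlam}
\overline{\lambda}=\frac{1}{2n}\overline{\Delta}(f)-\frac{1}{a^2}+\frac{2n-1}{2na^2}\Hess(f)(\xi,\xi).
\end{equation}

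It then remains to re-express (\ref{planlam}) in terms of the undeformed data. I would substitute the deformed Laplacian (\ref{la}) and invoke the two translations $\xi(f)=\eta(\grad(f))$ and $\xi(\xi(f))=\Hess(f)(\xi,\xi)$, the latter being immediate from $\nabla_\xi\xi=0$, which follows at once from (\ref{kk}). After collecting the two $\Hess(f)(\xi,\xi)$ contributions, whose coefficients combine as $-\frac{a-1}{2na^2}+\frac{2n-1}{2na^2}=\frac{2n-a}{2na^2}$, one arrives at the stated expression (\ref{lam}).

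The only genuine obstacle here is conceptual rather than computational: recognizing that the trace-plus-scalar-curvature route used for the previous theorems degenerates in the gradient case, so that the Reeb-direction evaluation (equivalently, an extra use of $\Ric(\xi,\xi)=-2n$) must be brought in to provide the one missing scalar equation; everything after that is bookkeeping with the deformation formulas. One may equivalently trace the tensor relation obtained by equating (\ref{333}) with (\ref{planRic}) against the \emph{original} metric $g$ and combine it with the scalar transformation (\ref{s}); because the Ricci and scalar deformations scale by different factors, the two resulting expressions for $\scal$ carry distinct coefficients of $\overline{\lambda}$ and can be solved whenever $a\neq 1$, producing the same formula (\ref{lam}).
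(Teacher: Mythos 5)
Your proof is correct, but it takes a genuinely different route from the paper's. Both arguments start identically, by inserting $\tfrac{1}{2}\pounds_V\overline{g}=\overline{\Hess}(f)$ and $\overline{\div}(V)=\overline{\Delta}(f)$ into the contracted soliton equation (\ref{4}) for the deformed structure, i.e.\ at (\ref{rr}). From there the paper converts everything to undeformed data via (\ref{333}), (\ref{he}), (\ref{la}) and takes \emph{two traces} --- of the converted equation w.r.t.\ $g$, giving (\ref{oo}), and of (\ref{rr}) w.r.t.\ $\overline{g}$, giving (\ref{ooo}) after using (\ref{s}) and (\ref{la}); the two resulting expressions for $\scal$ carry the distinct $\overline{\lambda}$-coefficients $2na(2n+a)$ and $2na(2n+1)$, so subtracting eliminates $\scal$ and, after cancelling a factor of $a-1$, yields (\ref{lam}). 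You instead evaluate the tensor equation in the Reeb direction, feeding in $\overline{\Ric}(\xi,\xi)=\Ric(\xi,\xi)=-2n$, $\overline{\Hess}(f)(\xi,\xi)=\Hess(f)(\xi,\xi)$ and $\overline{g}(\xi,\xi)=a^2$, and never touch the scalar curvature. The two routes are in fact the same linear combination of the components of the soliton equation in disguise: for any symmetric $(0,2)$-tensor $T$ one has $\on{tr}_{\overline{g}}T=\tfrac{1}{a}\on{tr}_g T-\tfrac{a-1}{a^2}T(\xi,\xi)$, so the paper's difference of traces is exactly $(a-1)$ times your $(\xi,\xi)$-evaluation, and the Kenmotsu identity $\Ric(\xi,\xi)=-2n$ that you invoke explicitly is precisely what makes (\ref{333}) and (\ref{s}) compatible in the paper's bookkeeping. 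What your version buys: it is shorter, it bypasses the heavy trace computations (\ref{oo})--(\ref{ooo}), and it involves no division by $a-1$, so it does not degenerate at $a=1$ (a trivial case, but one where the paper's two traces coincide and its elimination argument breaks down). Your preliminary observation that the $\overline{g}$-trace alone cannot determine $\overline{\lambda}$, since it only reproduces (\ref{9}), is accurate, your use of $\xi(f)=\eta(\grad(f))$ and $\xi(\xi(f))=\Hess(f)(\xi,\xi)$ (from $\nabla_\xi\xi=0$, by (\ref{kk})) matches the paper's, and the coefficient collection $-\tfrac{a-1}{2na^2}+\tfrac{2n-1}{2na^2}=\tfrac{2n-a}{2na^2}$ checks out, reproducing (\ref{lam}) exactly.
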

\begin{proof}
If $V=\overline{\grad}(f)$, then $\frac{1}{2}\pounds _{V}\overline{g}=\overline{\Hess}(f)$ and equation (\ref{4}) becomes
\begin{equation}\label{rr}
\overline{\Hess}(f)+\frac{1}{2n-1}\overline{\Ric}=\frac{2n\overline{\lambda}-\overline{\Delta}(f)}{2n-1}\overline{g}.
\end{equation}

Using (\ref{333}), (\ref{he}) and (\ref{la}) in (\ref{rr}) we get
\begin{equation}\label{pp}
(2n-1)\Hess(f)+\Ric+\frac{a-1}{a}[2n-(2n-1)\xi(f)](g-\eta\otimes \eta)=\end{equation}$$=\left[2n\overline{\lambda}-\frac{1}{a}\Delta(f)+\frac{2n(a-1)}{a^2}\xi(f)+\frac{a-1}{a^2}\xi(\xi(f))\right][ag+a(a-1)\eta\otimes \eta]
$$
and tracing (\ref{pp}) w.r.t. $g$, we obtain
\begin{equation}\label{oo}
\scal=2na(2n+a)\overline{\lambda}-(4n+a-1)\Delta(f)+\frac{2n(a-1)(4n+a-1)}{a}\xi(f)+\end{equation}$$+\frac{(2n+a)(a-1)}{a}\xi(\xi(f))-\frac{4n^2(a-1)}{a}.
$$

Now tracing (\ref{rr}) w.r.t. $\overline{g}$ we get
$$
4n \overline{\Delta}(f)+\overline{\scal}=2n(2n+1)\overline{\lambda}
$$
and using (\ref{s}) and (\ref{la}), we obtain
\begin{equation}\label{ooo}
\scal=2na(2n+1)\overline{\lambda}-4n\Delta(f)+\frac{8n^2(a-1)}{a}\xi(f)+\end{equation}$$+\frac{4n(a-1)}{a}\xi(\xi(f))-\frac{2n(2n+1)(a-1)}{a}.
$$

Now, equating (\ref{oo}) and (\ref{ooo}), taking into account that $\xi(\xi(f))=\Hess(f)(\xi,\xi)$, we get (\ref{lam}).

\end{proof}

\begin{example}
Consider the $3$-dimensional Kenmotsu manifold $(M, \phi,\xi,\eta,g)$, where $M=\{(x,y,z)\in\mathbb{R}^3, z>1\}$, with $(x,y,z)$ the standard coordinates in $\mathbb{R}^3$, and
$$\phi:= dx\otimes \frac{\partial}{\partial y}-dy\otimes \frac{\partial}{\partial x}, \ \ \xi:=\frac{\partial}{\partial z}, \ \ \eta:= dz, \ \ g:=e^{2z}(dx\otimes dx+dy\otimes dy)+dz\otimes dz.$$

Then the pair $(V=e^z\frac{\partial}{\partial z}, \lambda=2e^z-1)$ defines a shrinking gradient almost Riemann soliton \cite{bl}, with
$V=\grad(f)$, for $f(x,y,z)=e^z$.

If $(\overline{V}=\overline{\grad}(f),\overline{\lambda})$ defines a gradient almost Riemann soliton in the $D$-homothetically deformed Kenmotsu manifold $(M,\overline{\phi},\overline{\xi},\overline{\eta},\overline{g})$, then
$$\overline{V}= \frac{1}{a^2}V, \ \ \overline{\lambda}=\frac{1}{a^2}\lambda.$$
\end{example}

\begin{corollary}
Under the hypotheses of Theorem \ref{k}, if $V$ is $\overline{g}$-orthogonal to $\xi$, we get
\begin{equation}\label{as}
\overline{\lambda}=\frac{1}{2na}{\Delta}(f)-\frac{1}{a^2},
\end{equation}
\begin{equation}\label{ds}
\scal=-(2n-1)\Delta(f)-2n(2n+1).
\end{equation}
\end{corollary}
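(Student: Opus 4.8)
The plan is to read off the extra hypothesis as a single scalar condition on $f$ and then specialize the formulas already established in Theorem \ref{k}. Since $V=\overline{\grad}(f)$, the defining property of the deformed gradient gives $\overline{g}(V,X)=X(f)$ for every $X$, so the requirement that $V$ be $\overline{g}$-orthogonal to $\xi$ is precisely $\overline{g}(V,\xi)=\xi(f)=0$. The whole argument then rests on propagating the single identity $\xi(f)=0$ through the expressions for $\overline{\lambda}$ and $\scal$.

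First I would record the two consequences of $\xi(f)=0$ that annihilate terms in (\ref{lam}). Because $\eta=i_\xi g$, we have $\eta(\grad(f))=g(\xi,\grad(f))=\xi(f)=0$. Next, using (\ref{kk}) one checks that $\nabla_\xi\xi=\xi-\eta(\xi)\xi=0$, so $\Hess(f)(\xi,\xi)=\xi(\xi(f))-(\nabla_\xi\xi)(f)=\xi(\xi(f))$; since $\xi(f)$ vanishes identically, $\xi(\xi(f))=0$ as well, whence $\Hess(f)(\xi,\xi)=0$. Substituting these two vanishings into (\ref{lam}) removes the middle two terms and leaves exactly
$$\overline{\lambda}=\frac{1}{2na}\Delta(f)-\frac{1}{a^2},$$
which is (\ref{as}).

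For the scalar curvature I would return to relation (\ref{ooo}), which already expresses $\scal$ through $\overline{\lambda}$, $\Delta(f)$, $\xi(f)$ and $\xi(\xi(f))$. Setting $\xi(f)=\xi(\xi(f))=0$ collapses it to $\scal=2na(2n+1)\overline{\lambda}-4n\Delta(f)-\frac{2n(2n+1)(a-1)}{a}$. Inserting the value of $\overline{\lambda}$ just found, the term $2na(2n+1)\overline{\lambda}$ becomes $(2n+1)\Delta(f)-\frac{2n(2n+1)}{a}$; combining with the remaining $\Delta(f)$ contribution produces the coefficient $(2n+1)-4n=-(2n-1)$, while the two $a$-dependent constants telescope to $-2n(2n+1)$. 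This yields (\ref{ds}).

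The computation is entirely routine once the hypothesis is translated, so the only step requiring genuine care is the very first one: recognizing that ``$\overline{g}$-orthogonal to $\xi$'' is equivalent to $\xi(f)=0$ (and not to some condition involving $\grad(f)$ with respect to $g$), and then being disciplined in deducing that $\xi(f)\equiv 0$ forces $\xi(\xi(f))=0$ rather than silently assuming it. After that, only arithmetic simplification remains.
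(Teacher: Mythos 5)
Your proof is correct and follows exactly the route the paper intends for this corollary: translating $\overline{g}$-orthogonality of $V=\overline{\grad}(f)$ to $\xi$ into $\xi(f)=0$ (hence $\eta(\grad(f))=0$ and, since $\nabla_\xi\xi=0$, also $\Hess(f)(\xi,\xi)=\xi(\xi(f))=0$), then specializing (\ref{lam}) to get (\ref{as}) and the trace relation (\ref{ooo}) from the proof of Theorem \ref{k} to get (\ref{ds}). The arithmetic checks out, so no issues.
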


Considering (\ref{rr}) and computing the Hilbert-Schmidt norms, if $(V=\overline{\grad}(f),\overline{\lambda})$ defines a gradient almost Riemann soliton in a $(2n+1)$-dimensional $D$-homothetically deformed Kenmotsu manifold $(M,\overline{\phi},\overline{\xi},\overline{\eta},\overline{g})$, then
$$
|\overline{\Hess}(f)|_{\overline{g}}^2=\frac{1}{(2n-1)^2}[|\overline{\Ric}|_{\overline{g}}^2-4n^2(2n+1)\overline{\lambda}^2+16n^2\overline{\Delta}(f)\overline{\lambda}-(6n-1)(\overline{\Delta}(f))^2]
$$
and imposing the existence condition on $\overline{\lambda}$, we get:

\begin{theorem}
If $(V=\overline{\grad}(f),\overline{\lambda})$ defines a gradient almost Riemann soliton in a $(2n+1)$-dimensional $D$-homothetically deformed Kenmotsu manifold $(M,\overline{\phi},\overline{\xi},\overline{\eta},\overline{g})$, then
\begin{equation}\label{n}
|\overline{\Ric}|_{\overline{g}}^2\geq (2n-1)^2\left[|\overline{\Hess}(f)|_{\overline{g}}^2-\frac{1}{2n+1}(\overline{\Delta}(f))^2\right].
\end{equation}
\end{theorem}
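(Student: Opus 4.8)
The plan is to turn the soliton trace identity (\ref{rr}) into a pointwise quadratic constraint on the real-valued function $\overline{\lambda}$, and then extract the inequality (\ref{n}) from the condition that this quadratic actually admits a real root. First I would solve (\ref{rr}) algebraically for the deformed Hessian,
$$\overline{\Hess}(f)=\frac{2n\overline{\lambda}-\overline{\Delta}(f)}{2n-1}\,\overline{g}-\frac{1}{2n-1}\overline{\Ric},$$
and take the squared Hilbert--Schmidt norm of both sides with respect to $\overline{g}$. Expanding the right-hand side produces three contributions governed by $|\overline{g}|_{\overline{g}}^2=2n+1$, by the cross term $\langle \overline{g},\overline{\Ric}\rangle_{\overline{g}}=\overline{\scal}$, and by $|\overline{\Ric}|_{\overline{g}}^2$. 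To eliminate $\overline{\scal}$ I would substitute the trace relation $\overline{\scal}=2n(2n+1)\overline{\lambda}-4n\overline{\Delta}(f)$ obtained by tracing (\ref{rr}) w.r.t. $\overline{g}$. Collecting terms in $\overline{\lambda}$ and $\overline{\Delta}(f)$ yields exactly the identity displayed just before the theorem, namely
$$(2n-1)^2|\overline{\Hess}(f)|_{\overline{g}}^2=|\overline{\Ric}|_{\overline{g}}^2-4n^2(2n+1)\overline{\lambda}^2+16n^2\overline{\Delta}(f)\overline{\lambda}-(6n-1)(\overline{\Delta}(f))^2.$$

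Next I would read this as a quadratic equation in the unknown $\overline{\lambda}$,
$$4n^2(2n+1)\overline{\lambda}^2-16n^2\overline{\Delta}(f)\,\overline{\lambda}+\Big[(2n-1)^2|\overline{\Hess}(f)|_{\overline{g}}^2+(6n-1)(\overline{\Delta}(f))^2-|\overline{\Ric}|_{\overline{g}}^2\Big]=0 .$$
Since $\overline{\lambda}$ is a genuine real-valued smooth function on $M$, at each point it is a real root of this quadratic, so its discriminant must be nonnegative. Imposing that condition and dividing by the positive factor $16n^2$ gives
$$16n^2(\overline{\Delta}(f))^2\ge (2n+1)\Big[(2n-1)^2|\overline{\Hess}(f)|_{\overline{g}}^2+(6n-1)(\overline{\Delta}(f))^2-|\overline{\Ric}|_{\overline{g}}^2\Big].$$
Equivalently, completing the square in $\overline{\lambda}$ directly in the identity above and discarding the nonnegative square term $4n^2(2n+1)\big(\overline{\lambda}-\tfrac{2\overline{\Delta}(f)}{2n+1}\big)^2$ yields the same bound more transparently; this is precisely what ``imposing the existence condition on $\overline{\lambda}$'' means.

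The only genuine computation is the final simplification: rearranging the discriminant inequality, dividing by $2n+1>0$, and using the algebraic identity $(2n+1)(6n-1)-16n^2=-(2n-1)^2$ collapses the coefficient of $(\overline{\Delta}(f))^2$ precisely to $-(2n-1)^2/(2n+1)$, producing (\ref{n}). I expect the main (though modest) obstacle to lie in the norm expansion of the first step --- keeping careful track of the cross term $\langle\overline{g},\overline{\Ric}\rangle_{\overline{g}}$ and its replacement by $\overline{\scal}$ via the $\overline{g}$-trace of (\ref{rr}) --- rather than in the discriminant step, which is routine once the quadratic identity is in hand. Note that all divisions are by $2n-1>0$ and $2n+1>0$, so no sign reversals occur and the inequality direction is preserved throughout.
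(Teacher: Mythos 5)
Your proof is correct and follows essentially the same route as the paper: taking Hilbert--Schmidt norms in (\ref{rr}) together with its $\overline{g}$-trace $\overline{\scal}=2n(2n+1)\overline{\lambda}-4n\overline{\Delta}(f)$ to obtain the quadratic identity $(2n-1)^2|\overline{\Hess}(f)|_{\overline{g}}^2=|\overline{\Ric}|_{\overline{g}}^2-4n^2(2n+1)\overline{\lambda}^2+16n^2\overline{\Delta}(f)\overline{\lambda}-(6n-1)(\overline{\Delta}(f))^2$, and then reading the fact that $\overline{\lambda}$ is a real root as a nonnegative-discriminant condition, which is exactly what the paper means by ``imposing the existence condition on $\overline{\lambda}$.'' Your algebra, including the key simplification $(2n+1)(6n-1)-16n^2=-(2n-1)^2$, checks out, and the completing-the-square variant you mention is an equivalent and equally valid way to finish.
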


\begin{corollary}
If $(V=\overline{\grad}(f),\overline{\lambda})$ defines a solenoidal gradient almost Riemann soliton in a $(2n+1)$-dimensional $D$-homothetically deformed Kenmotsu manifold $(M,\overline{\phi},\overline{\xi},\overline{\eta},\overline{g})$, then
\begin{equation}
|\overline{\Ric}|_{\overline{g}}^2\geq (2n-1)^2|\overline{\Hess}(f)|_{\overline{g}}^2.
\end{equation}
\end{corollary}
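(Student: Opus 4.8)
The plan is to obtain this statement as an immediate specialization of the preceding theorem, whose bound is inequality (\ref{n}). The only real work is to interpret the extra hypothesis ``solenoidal'' correctly and substitute it into that inequality.

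First I would unwind the definition. The potential vector field here is $V=\overline{\grad}(f)$, and ``solenoidal'' means divergence-free in the deformed manifold, i.e. $\overline{\div}(V)=0$. Since the Laplace operator with respect to $\overline{g}$ is by definition the divergence of the gradient, we have $\overline{\Delta}(f)=\overline{\div}(\overline{\grad}(f))=\overline{\div}(V)$. Hence the solenoidal hypothesis is precisely the statement $\overline{\Delta}(f)=0$.

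Next I would substitute $\overline{\Delta}(f)=0$ directly into (\ref{n}). The correction term $-\frac{1}{2n+1}(\overline{\Delta}(f))^2$ inside the bracket then vanishes, so the right-hand side reduces to $(2n-1)^2|\overline{\Hess}(f)|_{\overline{g}}^2$, which is exactly the asserted inequality.

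I expect no genuine obstacle here: the corollary is a one-line consequence of the theorem. The only point requiring a moment's care is the identification $\overline{\div}(V)=\overline{\Delta}(f)$ for $V=\overline{\grad}(f)$, which is simply the defining relation among the deformed gradient, divergence and Laplace operators recorded earlier. One could alternatively invoke $\overline{\div}=\div$ from the earlier proposition to phrase the solenoidal condition through the undeformed divergence, but this leaves the conclusion $\overline{\Delta}(f)=0$ unchanged and hence does not affect the argument.
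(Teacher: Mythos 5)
Your proposal is correct and matches the paper's (implicit) argument: the corollary is stated without proof precisely because it is the immediate specialization of inequality (\ref{n}) to the case $\overline{\Delta}(f)=\overline{\div}(\overline{\grad}(f))=\overline{\div}(V)=0$. Your side remark that $\overline{\div}=\div$ makes the solenoidal condition unambiguous is also consistent with the paper's Proposition on the deformed operators.
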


Replacing now $\overline{\Ric}$, $\overline{\Hess}(f)$ and $\overline{\Delta}$ from (\ref{333}), (\ref{he}) and (\ref{la}) in (\ref{n}), we obtain:
\begin{proposition}\label{hhah}
If $(V=\overline{\grad}(f),\overline{\lambda})$ defines a gradient almost Riemann soliton in a $(2n+1)$-dimensional $D$-homothetically deformed Kenmotsu manifold $(M,\overline{\phi},\overline{\xi},\overline{\eta},\overline{g})$, then
$$
|\Ric|_{g}^2\geq (2n-1)^2|{\Hess}(f)|_{g}^2-4n\frac{a-1}{a}\scal-4n^2(2n+1)\left(\frac{a-1}{a}\right)^2-$$
$$-\frac{(2n-1)^2}{2n+1}({\Delta}(f))^2-\frac{2(2n-1)^2}{2n+1}\frac{a-1}{a}[\xi(f)-\xi(\xi(f))]\Delta(f)+$$
$$+\frac{2n(2n-1)^2}{2n+1}\left(\frac{a-1}{a}\right)^2(\xi(f))^2-\frac{2(2n-1)^2(n+na+a)(a-1)}{(2n+1)a^2}(\xi(\xi(f)))^2+$$
$$+\frac{2(2n-1)^2(2n+a)(a-1)}{(2n+1)a^2}\xi(f)\cdot \xi(\xi(f)).$$
\end{proposition}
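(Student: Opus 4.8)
The plan is to take the curvature inequality (\ref{n}), which is stated intrinsically in terms of the deformed quantities $\overline{\Ric}$, $\overline{\Hess}(f)$ and $\overline{\Delta}(f)$, and rewrite every term using the formulas relating deformed and undeformed data: (\ref{333}) for $\overline{\Ric}$, (\ref{he}) for $\overline{\Hess}(f)$ and (\ref{la}) for $\overline{\Delta}(f)$, together with the norm-conversion formula (\ref{par}) and the explicit $g$-inner products recorded above. Since (\ref{la}) already presents $\overline{\Delta}(f)$ as an explicit scalar in $\Delta(f)$, $\xi(f)$ and $\xi(\xi(f))$, the term $(\overline{\Delta}(f))^2$ is obtained by squaring directly; the substantive work is to express the two Hilbert--Schmidt norms $|\overline{\Ric}|_{\overline{g}}^2$ and $|\overline{\Hess}(f)|_{\overline{g}}^2$ in terms of $g$-data.

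For the Ricci norm I would write $\overline{\Ric}=\Ric+\frac{2n(a-1)}{a}(g-\eta\otimes\eta)$ and expand $|\overline{\Ric}|_{\overline{g}}^2$ through (\ref{par}). The key observation is that the correction tensor $g-\eta\otimes\eta$ annihilates the pair $(\xi,\xi)$, since $g(\xi,\xi)=\eta(\xi)^2=1$, so $\overline{\Ric}(\xi,\xi)=\Ric(\xi,\xi)=-2n$, keeping the $\frac{a^2-1}{a^4}[T(\xi,\xi)]^2$ contribution in (\ref{par}) controlled. Expanding $|\overline{\Ric}|_g^2$ then invokes the $g$-inner products recorded above, namely $\langle\Ric,g\rangle_g=\scal$, $\langle\Ric,\eta\otimes\eta\rangle_g=-2n$ and $|g-\eta\otimes\eta|_g^2=2n$. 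The same scheme applied to $\overline{\Hess}(f)=\Hess(f)-\frac{a-1}{a}\xi(f)(g-\eta\otimes\eta)$ gives $\overline{\Hess}(f)(\xi,\xi)=\xi(\xi(f))$ and expands $|\overline{\Hess}(f)|_g^2$ through $\langle\Hess(f),g\rangle_g=\Delta(f)$ and $\langle\Hess(f),\eta\otimes\eta\rangle_g=\xi(\xi(f))$. Every norm produced this way carries an overall factor $\frac{1}{a^2}$.

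I would then multiply the whole inequality (\ref{n}) through by $a^2$ to clear denominators and solve for $|\Ric|_g^2$. The terms independent of $f$ arise only from the Ricci side and collapse cleanly: the $\scal$-linear piece yields $-4n\frac{a-1}{a}\scal$, while the purely numerical pieces $-\frac{8n^2(a-1)}{a}-\frac{8n^3(a-1)^2}{a^2}+\frac{4n^2(a^2-1)}{a^2}$ factor through $\frac{4n^2(a-1)}{a^2}$ and reduce to $-4n^2(2n+1)\left(\frac{a-1}{a}\right)^2$, matching the stated constant. The $f$-dependent terms are then assembled from two sources, the expansion of $(2n-1)^2|\overline{\Hess}(f)|_{\overline{g}}^2$ and that of $-\frac{(2n-1)^2}{2n+1}(\overline{\Delta}(f))^2$.

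The main obstacle is purely the bookkeeping in this last step: the monomials $(\xi(f))^2$, $(\xi(\xi(f)))^2$, $\xi(f)\,\xi(\xi(f))$, $\Delta(f)\,\xi(\xi(f))$ and $\xi(f)\,\Delta(f)$ each receive contributions from both the Hessian-norm expansion and the squared Laplacian, and they only consolidate into the displayed coefficients once the $\frac{1}{2n+1}$ weights are incorporated. For instance, the coefficient of $(\xi(f))^2$ emerges as $\frac{2n(2n-1)^2(a-1)^2}{a^2}\bigl(1-\frac{2n}{2n+1}\bigr)=\frac{2n(2n-1)^2}{2n+1}\left(\frac{a-1}{a}\right)^2$, which is exactly the stated term, and the remaining coefficients follow the same cancellation pattern. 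There is no conceptual difficulty beyond this careful collection of like terms, so the proof reduces to a lengthy but routine algebraic simplification.
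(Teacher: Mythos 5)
Your proposal is correct and is essentially identical to the paper's own (very terse) proof: the paper simply substitutes $\overline{\Ric}$, $\overline{\Hess}(f)$ and $\overline{\Delta}(f)$ from (\ref{333}), (\ref{he}) and (\ref{la}) into the inequality (\ref{n}), converting the $\overline{g}$-norms to $g$-norms via (\ref{par}) and the listed $g$-inner products, exactly as you describe. Your bookkeeping also checks out — in particular the constant terms do collapse to $-4n^2(2n+1)\left(\frac{a-1}{a}\right)^2$, and the cross-term coefficients such as $\frac{2(2n-1)^2(2n+a)(a-1)}{(2n+1)a^2}$ for $\xi(f)\,\xi(\xi(f))$ and $-\frac{2(2n-1)^2(n+na+a)(a-1)}{(2n+1)a^2}$ for $(\xi(\xi(f)))^2$ come out as stated.
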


\begin{corollary}
Under the hypotheses of Proposition \ref{hhah}, if $V$ is $\overline{g}$-orthogonal to $\xi$, we get
\begin{equation}
|\Ric|_{g}^2\geq (2n-1)^2|{\Hess}(f)|_{g}^2-\frac{(2n-1)^2}{2n+1}({\Delta}(f))^2+\frac{4n(2n-1)(a-1)}{a}\Delta(f)+\end{equation}$$+\frac{4n^2(2n+1)(a^2-1)}{a^2}.$$

Moreover, if $f$ is a $\Delta$-harmonic function, then
$$|\Ric|_{g}^2\geq (2n-1)^2|{\Hess}(f)|_{g}^2+\frac{4n^2(2n+1)(a^2-1)}{a^2}.$$
\end{corollary}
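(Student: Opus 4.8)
The plan is to begin from Proposition \ref{hhah} and to collapse its right-hand side using the orthogonality hypothesis, which I first translate into an analytic condition on $f$. Since $V=\overline{\grad}(f)$ is characterized by $\overline{g}(\overline{\grad}(f),X)=df(X)$ for every vector field $X$, evaluating at $X=\xi$ gives $\overline{g}(V,\xi)=\xi(f)$; hence the assumption that $V$ is $\overline{g}$-orthogonal to $\xi$ is equivalent to $\xi(f)=0$ identically on $M$. Because this holds everywhere, $\xi(f)$ is the zero function, so applying $\xi$ once more yields $\xi(\xi(f))=0$ as well.

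With $\xi(f)=0$ and $\xi(\xi(f))=0$, every term on the right-hand side of Proposition \ref{hhah} carrying a factor of $\xi(f)$, $(\xi(f))^2$, $\xi(\xi(f))$, $(\xi(\xi(f)))^2$, or the mixed product $\xi(f)\cdot\xi(\xi(f))$ drops out, and the contribution $\frac{a-1}{a}[\xi(f)-\xi(\xi(f))]\Delta(f)$ vanishes as well. This leaves only
$$|\Ric|_g^2\geq (2n-1)^2|\Hess(f)|_g^2-4n\frac{a-1}{a}\scal-4n^2(2n+1)\left(\frac{a-1}{a}\right)^2-\frac{(2n-1)^2}{2n+1}(\Delta(f))^2.$$

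Next I eliminate the scalar curvature. Under the same orthogonality hypothesis, the Corollary following Theorem \ref{k} supplies relation (\ref{ds}), namely $\scal=-(2n-1)\Delta(f)-2n(2n+1)$. Substituting this turns the term $-4n\frac{a-1}{a}\scal$ into $\frac{4n(2n-1)(a-1)}{a}\Delta(f)+\frac{8n^2(2n+1)(a-1)}{a}$, which reproduces precisely the asserted $\Delta(f)$-linear coefficient. Combining the two constant contributions via $\frac{8n^2(2n+1)(a-1)}{a}-4n^2(2n+1)\left(\frac{a-1}{a}\right)^2=\frac{4n^2(2n+1)(a^2-1)}{a^2}$ then yields the first displayed inequality. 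Finally, imposing $\Delta(f)=0$, i.e. $\Delta$-harmonicity, annihilates the remaining $\Delta(f)$-linear term and delivers the second inequality.

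The only delicate point, and the step I expect to need the most care in stating, is the passage from the pointwise orthogonality relation to $\xi(\xi(f))=0$: this is legitimate precisely because the orthogonality is assumed to hold on all of $M$, so that $\xi(f)\equiv 0$ is a genuine identity of functions and may therefore be differentiated. Everything else is a direct substitution of (\ref{ds}) into the reduced inequality together with an elementary simplification of the constant term.
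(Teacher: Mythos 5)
Your proposal is correct and follows exactly the route the paper intends: translating $\overline{g}$-orthogonality into $\xi(f)\equiv 0$ (hence $\xi(\xi(f))=0$ by differentiating the identity), cancelling the corresponding terms in Proposition \ref{hhah}, and eliminating $\scal$ via (\ref{ds}), with the constant-term simplification $\frac{8n^2(2n+1)(a-1)}{a}-4n^2(2n+1)\left(\frac{a-1}{a}\right)^2=\frac{4n^2(2n+1)(a^2-1)}{a^2}$ checking out. The $\Delta$-harmonic case then follows immediately by setting $\Delta(f)=0$, just as in the paper.
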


\begin{proposition}
If $(V=\overline{\grad}(f),\overline{\lambda})$ defines a solenoidal gradient almost Riemann soliton in a $(2n+1)$-dimensional $D$-homothetically deformed Kenmotsu manifold $(M,\overline{\phi},\overline{\xi},\overline{\eta},\overline{g})$, then
$$
|\Ric|_{g}^2\geq (2n-1)^2|{\Hess}(f)|_{g}^2+\frac{a^2-1}{a^2}[4n^2-(2n-1)^2(\xi(\xi(f)))^2].$$
\end{proposition}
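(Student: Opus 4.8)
The plan is to derive the estimate from the solenoidal gradient Riemann bound established just above, $|\overline{\Ric}|_{\overline{g}}^2\geq (2n-1)^2|\overline{\Hess}(f)|_{\overline{g}}^2$, by converting the two $\overline{g}$-Hilbert--Schmidt norms into $g$-norms through the comparison formula (\ref{par}). Since the solenoidal hypothesis $\overline{\Delta}(f)=0$ is already absorbed into that bound, no further curvature identity is needed: the whole argument is the norm conversion followed by a rearrangement.

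The key observation is that, for any symmetric $(0,2)$-tensor $T$, formula (\ref{par}) differs from the plain rescaling $\tfrac{1}{a^2}|T|_g^2$ only by the term $-\tfrac{a^2-1}{a^4}[T(\xi,\xi)]^2$, so I only need the two tensors evaluated at $(\xi,\xi)$. From (\ref{333}) and the Kenmotsu identity $\Ric(\xi,\xi)=-2n$ quoted above, together with $(g-\eta\otimes\eta)(\xi,\xi)=g(\xi,\xi)-\eta(\xi)^2=0$, I would get $\overline{\Ric}(\xi,\xi)=\Ric(\xi,\xi)=-2n$; similarly, from (\ref{he}) and the same vanishing, $\overline{\Hess}(f)(\xi,\xi)=\Hess(f)(\xi,\xi)$, and since (\ref{kk}) gives $\nabla_\xi\xi=\xi-\eta(\xi)\xi=0$, this equals $\xi(\xi(f))$ (the identity $\Hess(f)(\xi,\xi)=\xi(\xi(f))$ already used in the proof of Theorem \ref{k}). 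Hence (\ref{par}) produces
$$|\overline{\Ric}|_{\overline g}^2=\frac{1}{a^2}|\overline{\Ric}|_{g}^2-\frac{4n^2(a^2-1)}{a^4},\qquad |\overline{\Hess}(f)|_{\overline g}^2=\frac{1}{a^2}|\overline{\Hess}(f)|_{g}^2-\frac{a^2-1}{a^4}(\xi(\xi(f)))^2.$$

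Substituting both identities into the bound and multiplying through by $a^2$ would give $|\overline{\Ric}|_{g}^2-\tfrac{4n^2(a^2-1)}{a^2}\geq (2n-1)^2|\overline{\Hess}(f)|_{g}^2-\tfrac{(2n-1)^2(a^2-1)}{a^2}(\xi(\xi(f)))^2$; transposing the two $(a^2-1)/a^2$ terms to the right-hand side then collects them into the single bracket $\tfrac{a^2-1}{a^2}[4n^2-(2n-1)^2(\xi(\xi(f)))^2]$, which (with the Ricci and Hessian written as in the statement) is the claimed inequality.

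I expect the only genuine obstacle to be careful bookkeeping: keeping the powers of $a$ straight when passing from the $1/a^2$ and $(a^2-1)/a^4$ weights in (\ref{par}) to the final $(a^2-1)/a^2$ after clearing $a^2$, and matching signs so that the Ricci correction enters with a $+4n^2$ and the Hessian correction with a $-(2n-1)^2(\xi(\xi(f)))^2$ inside the same bracket. This mirrors the derivation of the $\overline{g}$-orthogonal corollary to Proposition \ref{hhah}, the simplification here being that the solenoidal condition is already built in, so no scalar-curvature substitution is needed and the final bound is free of $\scal$ and of the first-order quantity $\xi(f)$.
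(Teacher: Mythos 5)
Your proposal is correct and is essentially the paper's own (implicit) derivation: the proposition is obtained by feeding the solenoidal corollary $|\overline{\Ric}|_{\overline{g}}^2\geq(2n-1)^2|\overline{\Hess}(f)|_{\overline{g}}^2$ into the norm-comparison formula (\ref{par}), using that $\overline{\Ric}(\xi,\xi)=\Ric(\xi,\xi)=-2n$ and $\overline{\Hess}(f)(\xi,\xi)=\Hess(f)(\xi,\xi)=\xi(\xi(f))$ because $(g-\eta\otimes\eta)(\xi,\xi)=0$, then clearing the factor $a^2$ and rearranging, exactly as you do. The only delicate point is the one you pass over with ``with the Ricci and Hessian written as in the statement'': what your computation literally yields is the inequality for $|\overline{\Ric}|_{g}^2$ and $|\overline{\Hess}(f)|_{g}^2$, and by (\ref{333}) and (\ref{he}) these barred tensors differ from $\Ric$ and $\Hess(f)$ by multiples of $\frac{a-1}{a}(g-\eta\otimes\eta)$, so dropping the bars is an identification rather than an identity; since the stated proposition rests on precisely the same identification (an honest substitution of (\ref{333}) and (\ref{he}), as in Proposition \ref{hhah}, would reintroduce $\scal$, $\xi(f)$ and $\Delta(f)$ terms), your proof and the paper's coincide, weak point included.
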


\section{Almost Ricci solitons}

On a $(2n+1)$-dimensional smooth manifold $M$, a Riemannian metric $g$ and a non-vanishing vector field $V$ is said to define
 \textit{a Ricci soliton} \cite{ham} if there exists a real constant $\lambda$ such that
\begin{equation}\label{1}
\frac{1}{2}\pounds _{V}g+\Ric=\lambda g,
\end{equation}
where $\pounds _{V}$ denotes the Lie derivative operator in the direction of the vector field $V$, $\Ric$ and $\scal$ are the Ricci and the scalar curvature of $g$, respectively.
If $\lambda$ is a smooth function on $M$, we call $(V,\lambda)$ an \textit{almost Ricci soliton}. Moreover, if $V$ is a gradient vector field, we call $(V,\lambda)$ a \textit{gradient almost Ricci soliton}. A Ricci soliton defined by $(V,\lambda)$ is said to be \textit{shrinking}, \textit{steady} or \textit{expanding} according as $\lambda$ is positive, zero or negative, respectively.

Tracing (\ref{1}), we obtain:
\begin{equation}\label{sc}
\scal=(2n+1)\lambda-\div(V).
\end{equation}

\begin{theorem}\label{ppp}
If $(\overline{\xi},\overline{\lambda})$ defines an almost Ricci soliton in a $(2n+1)$-dimensional $D$-homothetically deformed Kenmotsu manifold $(M,\overline{\phi},\overline{\xi},\overline{\eta},\overline{g})$, then it is an expanding Ricci soliton and:
$$
\overline{\lambda}=-\frac{2n}{a^2},
$$
\begin{equation}\label{r222}
\Ric=-(2n+1)g+\eta\otimes \eta,
\end{equation}
\begin{equation}\label{scalar}
\scal=-4n(n+1).
\end{equation}
\end{theorem}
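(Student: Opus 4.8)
The plan is to mirror the proof of Theorem~\ref{p2}, with the simplification that the Ricci soliton equation (\ref{1}) already involves $\Ric$ directly, so no preliminary contraction (as in passing from (\ref{2}) to (\ref{4})) is needed. First I would write the almost Ricci soliton condition for the deformed data as
\begin{equation*}
\tfrac{1}{2}\pounds_{\overline{\xi}}\overline{g}+\overline{\Ric}=\overline{\lambda}\,\overline{g},
\end{equation*}
and substitute (\ref{l}), (\ref{333}) and $\overline{g}=ag+a(a-1)\eta\otimes\eta$. Each term then becomes a combination of $g$ and $\eta\otimes\eta$ whose coefficients are affine in $\overline{\lambda}$, and solving for $\Ric$ should produce the analogue of (\ref{B}),
\begin{equation*}
\Ric=\left[a\overline{\lambda}-1-\tfrac{2n(a-1)}{a}\right]g+\left[a(a-1)\overline{\lambda}+1+\tfrac{2n(a-1)}{a}\right]\eta\otimes\eta.
\end{equation*}

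To determine $\overline{\lambda}$ I would evaluate this expression at $(\xi,\xi)$, using $g(\xi,\xi)=\eta(\xi)=1$: the two bracketed constants cancel and everything collapses to $a^2\overline{\lambda}$. Imposing the Kenmotsu identity $\Ric(\xi,\xi)=-2n$ then forces $\overline{\lambda}=-\frac{2n}{a^2}$ at once, for every $a>0$. A route closer to the template of Theorem~\ref{p2} is also available: trace the displayed $\Ric$ against $g$ (via $\langle g,g\rangle_g=2n+1$ and $\langle g,\eta\otimes\eta\rangle_g=1$) for one expression of $\scal$, obtain a second from the trace relation (\ref{sc}) combined with (\ref{s}) and (\ref{di}), and equate them; the difference should reduce to $a(a-1)\overline{\lambda}=-\frac{2n(a-1)}{a}$, giving the same $\overline{\lambda}$ after dividing by $a-1$.

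Finally I would substitute $\overline{\lambda}=-\frac{2n}{a^2}$ back into the displayed $\Ric$: the $g$-coefficient simplifies by $-\frac{2n}{a}-1-\frac{2n(a-1)}{a}=-(2n+1)$ and the $\eta\otimes\eta$-coefficient by $-\frac{2n(a-1)}{a}+1+\frac{2n(a-1)}{a}=1$, which yields (\ref{r222}); tracing this identity (or inserting $\overline{\lambda}$ into either scalar formula) gives $\scal=-2n-2n(2n+1)=-4n(n+1)$, i.e. (\ref{scalar}). Since $\overline{\lambda}=-\frac{2n}{a^2}$ is a strictly negative genuine constant, the soliton is in fact an expanding Ricci soliton. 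I do not expect any real obstacle beyond careful coefficient bookkeeping; the one point to flag is that the trace route divides by $a-1$ and so tacitly assumes a nontrivial deformation $a\neq1$, whereas the $(\xi,\xi)$-evaluation avoids this and covers all $a>0$.
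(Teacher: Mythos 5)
Your proposal is correct, and its first half coincides with the paper's proof: substituting (\ref{l}), (\ref{333}) and $\overline{g}=ag+a(a-1)\eta\otimes\eta$ into the soliton equation yields exactly the paper's intermediate identity (\ref{B5}), and your back-substitution of $\overline{\lambda}=-\frac{2n}{a^2}$ into it reproduces (\ref{r222}) and, by tracing, (\ref{scalar}). Where you genuinely diverge is in how $\overline{\lambda}$ is pinned down. The paper traces (\ref{B5}) and compares with a second scalar identity (\ref{C5}), obtained from the traced soliton equation (\ref{sc}) together with (\ref{s}) and (\ref{di}); as you correctly note for your ``alternative'' route, this comparison collapses to $a(a-1)\overline{\lambda}=-\frac{2n(a-1)}{a}$, so the paper's own argument tacitly divides by $a-1$ and is silent in the trivial case $a=1$. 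Your primary route instead evaluates (\ref{B5}) at $(\xi,\xi)$: the constant terms cancel to give $\Ric(\xi,\xi)=a^2\overline{\lambda}$, and the Kenmotsu identity $\Ric(\xi,\xi)=-2n$ (quoted in Section 3 of the paper from \cite{tripathi}) forces $\overline{\lambda}=-\frac{2n}{a^2}$ for every $a>0$. This buys two things: it needs no case distinction on $a$, and it replaces the second scalar equation (\ref{C5}) entirely; the price is importing an external curvature identity of the undeformed Kenmotsu structure, whereas the paper's computation is self-contained within the deformed soliton data (modulo the $a\neq1$ caveat). Both routes are sound, and your concluding observation that the constant, strictly negative $\overline{\lambda}$ makes this an expanding genuine Ricci soliton matches the paper's convention.
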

\begin{proof}
Equating (\ref{333}) and (\ref{1}) and using (\ref{l}) and (\ref{di}), we get:
\begin{equation}\label{B5}
\Ric=\left(a\overline{\lambda}-1-2n\frac{a-1}{a}\right)g+\left[a(a-1)\overline{\lambda}+1+2n\frac{a-1}{a}\right]\eta\otimes \eta.
\end{equation}

Also, by equating (\ref{s}) and (\ref{sc}) and using (\ref{di}), we obtain:
\begin{equation}\label{C5}
\scal=(2n+1)a\overline{\lambda}-2n-2n(2n+1)\frac{a-1}{a}.
\end{equation}

Now, tracing (\ref{B5}) and considering (\ref{C5}), we get:
$$\overline{\lambda}=-2n\frac{1}{a^2}$$
which, by replacing it in (\ref{B5}) and (\ref{C5}), gives (\ref{r222}) and (\ref{scalar}).
\end{proof}

\begin{remark}
Under the hypotheses of Theorem \ref{ppp}, we get $$|\Ric|^2=2n(4n^2+6n+3).$$
\end{remark}

\begin{proposition}
If $(\overline{\xi},\overline{\lambda})$ defines an almost Ricci soliton in $(M,\overline{\phi},\overline{\xi},\overline{\eta},\overline{g})$, then $(\xi,\lambda)$ defines an almost Ricci soliton in $(M,\phi,\xi,\eta,g)$ if and only if $\lambda=-2n$.
\end{proposition}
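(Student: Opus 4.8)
The plan is to invoke Theorem~\ref{ppp}, which under the stated hypothesis completely fixes the Ricci tensor of the underlying Kenmotsu manifold, and then to test the almost Ricci soliton condition with potential field $\xi$ directly against the defining equation~(\ref{1}).

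First I would write the soliton condition for $(\xi,\lambda)$ in the original structure,
$$\frac{1}{2}\pounds_\xi g+\Ric=\lambda g,$$
and substitute the two known ingredients: from~(\ref{ll}) we have $\frac{1}{2}\pounds_\xi g=g-\eta\otimes\eta$, while Theorem~\ref{ppp}, equation~(\ref{r222}), gives $\Ric=-(2n+1)g+\eta\otimes\eta$. Adding these, the $\eta\otimes\eta$ terms cancel and the left-hand side collapses to a pure multiple of $g$:
$$\frac{1}{2}\pounds_\xi g+\Ric=\bigl(g-\eta\otimes\eta\bigr)+\bigl(-(2n+1)g+\eta\otimes\eta\bigr)=-2n\,g.$$

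Thus the soliton equation becomes $-2n\,g=\lambda g$. Since $g$ is nowhere zero as a tensor field, this identity holds if and only if $\lambda=-2n$, which establishes both directions of the biconditional simultaneously. The only step requiring any attention is the exact cancellation of the two $\eta\otimes\eta$ contributions, which is precisely what renders $\frac{1}{2}\pounds_\xi g+\Ric$ proportional to $g$; I expect no real obstacle here, since once Theorem~\ref{ppp} supplies the explicit form of $\Ric$, the argument reduces to a single substitution followed by reading off the scalar factor.
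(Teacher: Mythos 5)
Your proof is correct: the paper states this proposition without any proof, and your argument --- substituting $\Ric=-(2n+1)g+\eta\otimes\eta$ from Theorem~\ref{ppp} and $\frac{1}{2}\pounds_{\xi}g=g-\eta\otimes\eta$ from (\ref{ll}) so that the left-hand side collapses to $-2n\,g$ --- is precisely the intended one, given the proposition's placement immediately after that theorem. Both directions of the equivalence are handled properly, since the hypothesis on the deformed soliton is exactly what pins down $\Ric$ and makes the ``if'' direction valid.
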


\begin{theorem}\label{h}
If $(V,\overline{\lambda})$ defines an almost Ricci soliton of solenoidal type in a $(2n+1)$-dimensional $D$-homothetically deformed Kenmotsu manifold $(M,\overline{\phi},\overline{\xi},\overline{\eta},\overline{g})$, then:
$$
\overline{\lambda}=\xi(\eta(V))-\frac{2n}{a^2},
$$
\begin{equation}\label{r2222}
\Ric=\left[a\xi(\eta(V))-2n\right]g(X,Y)+a(a-1)\xi(\eta(V))\eta(X)\eta(Y)-
\end{equation}
$$-\frac{a}{2}[g(\nabla_XV,Y)+g(\nabla_YV,X)]-
\frac{a(a-1)}{2}\{\eta(X)[\eta(\nabla_YV)+g(Y,V)]+$$$$+\eta(Y)[\eta(\nabla_XV)+g(X,V)]-2\eta(X)\eta(Y)\eta(V)\},
$$
\begin{equation}\label{s25}
\scal=(2n+1)[a\xi(\eta(V))-2n].
\end{equation}
\end{theorem}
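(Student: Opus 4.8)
The plan is to adapt, to the solenoidal almost Ricci setting, the elimination argument already used in the proofs of Theorem \ref{ppp} and Theorem \ref{hh}. I would start from the defining relation (\ref{1}) of the almost Ricci soliton written for the deformed structure,
$$\frac{1}{2}\pounds_V\overline{g}+\overline{\Ric}=\overline{\lambda}\,\overline{g},$$
and transport everything to the base metric using $\overline{g}=ag+a(a-1)\eta\otimes\eta$ together with (\ref{333}) for $\overline{\Ric}$. Solving for $\Ric$ then produces an intermediate identity of exactly the shape of (\ref{B5}), except that the $\frac{1}{2}\pounds_V\overline{g}$ term now contributes the $V$-dependent expressions of (\ref{r2222}) instead of the explicit multiple of $g-\eta\otimes\eta$ that $\pounds_{\overline{\xi}}\overline{g}$ produced in Theorem \ref{ppp}.

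The computational heart of the proof is the evaluation of $\pounds_V\overline{g}$. By linearity, $\pounds_V\overline{g}=a\,\pounds_Vg+a(a-1)\,\pounds_V(\eta\otimes\eta)$, where $(\pounds_Vg)(X,Y)=g(\nabla_XV,Y)+g(\nabla_YV,X)$. For the second summand I would write $\eta=i_\xi g$ and use (\ref{kk}) to obtain $\nabla_V\xi=V-\eta(V)\xi$, which gives $(\pounds_V\eta)(X)=g(V,X)-\eta(V)\eta(X)+\eta(\nabla_XV)$ and hence
$$(\pounds_V(\eta\otimes\eta))(X,Y)=\eta(Y)[g(V,X)+\eta(\nabla_XV)]+\eta(X)[g(V,Y)+\eta(\nabla_YV)]-2\eta(V)\eta(X)\eta(Y).$$
Inserting this into the intermediate identity reproduces precisely the $V$-dependent (derivative) terms of (\ref{r2222}), while the $g$- and $\eta\otimes\eta$-coefficients of that identity, namely $a\overline{\lambda}-2n\frac{a-1}{a}$ and $a(a-1)\overline{\lambda}+2n\frac{a-1}{a}$, still carry the unknown $\overline{\lambda}$.

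To pin down $\overline{\lambda}$ I would run the two scalar equations in parallel, exactly as before. Tracing the deformed soliton equation through (\ref{sc}) and using $\overline{\div}(V)=\div(V)=0$ gives $\overline{\scal}=(2n+1)\overline{\lambda}$, which combined with (\ref{s}) yields the analog of (\ref{C5}), namely $\scal=(2n+1)a\overline{\lambda}-2n(2n+1)\frac{a-1}{a}$. On the other hand, tracing the intermediate $\Ric$ identity w.r.t.\ $g$ and equating the outcome with this expression leaves a single linear equation for $\overline{\lambda}$. The one nonroutine ingredient is the $g$-trace of $\pounds_V(\eta\otimes\eta)$: since $\nabla_\xi\xi=0$ by (\ref{kk}), one checks that it equals $2\,\eta(\nabla_\xi V)=2\,\xi(\eta(V))$, and this is precisely what forces the term $\xi(\eta(V))$ into the answer. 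Solving (after dividing by $a(a-1)$, i.e. assuming the deformation is nontrivial) gives $\overline{\lambda}=\xi(\eta(V))-\frac{2n}{a^2}$, and back-substitution into the intermediate $\Ric$ identity and into the scalar relation delivers (\ref{r2222}) and (\ref{s25}).

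I expect the main obstacle to be purely bookkeeping: computing $\pounds_V(\eta\otimes\eta)$ correctly — in particular keeping the $g(V,X)$ and $\eta(\nabla_XV)$ contributions coming from $\nabla_V\xi=V-\eta(V)\xi$ — and then extracting its trace cleanly so that $\mathrm{tr}_g(\pounds_V(\eta\otimes\eta))=2\xi(\eta(V))$ comes out as stated. Everything downstream is the same linear elimination already performed in Theorems \ref{ppp} and \ref{hh}.
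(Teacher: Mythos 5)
Your proposal is correct and follows essentially the same route as the paper's proof: write the deformed soliton equation, substitute (\ref{333}) and $\overline{g}=ag+a(a-1)\eta\otimes \eta$ to obtain the intermediate identity (the paper's (\ref{B12})), get the scalar relation (\ref{C12}) from (\ref{s}) and (\ref{sc}) using $\overline{\div}(V)=\div(V)=0$, then trace, eliminate, solve for $\overline{\lambda}$ and back-substitute. Your explicit computation of $\pounds _{V}(\eta\otimes \eta)$ via $\nabla_V\xi=V-\eta(V)\xi$, its $g$-trace $2\xi(\eta(V))$, and the remark that solving for $\overline{\lambda}$ requires dividing by $a(a-1)$ (i.e.\ a nontrivial deformation, $a\neq 1$) are precisely the details the paper leaves implicit.
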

\begin{proof}
Equating (\ref{333}) and (\ref{1}) and using $\div(V)=0$, we get:
\begin{equation}\label{B12}
\Ric(X,Y)=\left(a\overline{\lambda}-2n\frac{a-1}{a}\right)g(X,Y)+\left[a(a-1)\overline{\lambda}+2n\frac{a-1}{a}\right]\eta(X)\eta(Y)-\end{equation}
$$-\frac{a(a-1)}{2}\{\eta(X)[\eta(\nabla_YV)+g(Y,V)]+
\eta(Y)[\eta(\nabla_XV)+g(X,V)]-$$
$$-2\eta(X)\eta(Y)\eta(V)\}-\frac{a}{2}[g(\nabla_XV,Y)+g(\nabla_YV,X)].$$

Also, by equating (\ref{s}) and (\ref{sc}), we obtain:
\begin{equation}\label{C12}
\scal=(2n+1)a\overline{\lambda}-2n(2n+1)\frac{a-1}{a}.
\end{equation}

Now, tracing (\ref{B12}) and considering (\ref{C12}), we get:
$$\overline{\lambda}=\xi(\eta(V))-2n\frac{1}{a^2}$$
which, by replacing it in (\ref{B12}) and (\ref{C12}), gives (\ref{r2222}) and (\ref{s25}).
\end{proof}

\begin{corollary}
Under the hypotheses of Theorem \ref{h},
if $V$ is $g$-orthogonal to $\xi$, then $(M,g)$ is of negative constant scalar curvature and $(V,\overline{\lambda})$ is an expanding Ricci soliton.
\end{corollary}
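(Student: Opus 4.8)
The plan is to specialize the two curvature identities of Theorem \ref{h} under the extra hypothesis that $V$ is $g$-orthogonal to $\xi$, i.e. $g(V,\xi)=0$. First I would translate this orthogonality into a statement about $\eta(V)$: since the almost contact metric structure satisfies $\eta=i_{\xi}g$, the condition $g(V,\xi)=0$ is precisely $\eta(V)=0$. As this holds at every point of $M$, the function $\eta(V)$ vanishes identically on $M$, and therefore so does its derivative along $\xi$, giving $\xi(\eta(V))=0$. This single vanishing is what will collapse the two formulas of Theorem \ref{h} to constants.

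Next I would substitute $\xi(\eta(V))=0$ into the scalar curvature formula (\ref{s25}). This immediately yields $\scal=(2n+1)(a\cdot 0-2n)=-2n(2n+1)$, which is a strictly negative real number independent of the point. Hence $(M,g)$ has negative constant scalar curvature, establishing the first assertion of the corollary.

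Finally, inserting $\xi(\eta(V))=0$ into the expression for $\overline{\lambda}$ from Theorem \ref{h} gives $\overline{\lambda}=-\frac{2n}{a^2}$. Since $a$ is a positive constant, $\overline{\lambda}$ is a genuine (constant) negative number rather than merely a smooth function; consequently the almost Ricci soliton is in fact a true Ricci soliton, and being defined by a negative $\overline{\lambda}$ it is expanding in the sense fixed in the definition preceding (\ref{1}). One could also note, if desired, that the Ricci tensor formula (\ref{r2222}) simplifies correspondingly once $\xi(\eta(V))=0$, but this is not needed for the stated conclusions.

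I do not expect any serious obstacle: the whole argument is a direct substitution into the already-established formulas of Theorem \ref{h}. The only point that requires a moment of care is the identification $g(V,\xi)=0 \Leftrightarrow \eta(V)=0$ via $\eta=i_{\xi}g$, together with the observation that the pointwise (hence identical) vanishing of $\eta(V)$ forces $\xi(\eta(V))=0$; it is exactly this derivative term, and not $\eta(V)$ alone, that appears in both formulas, so the orthogonality must be used globally rather than only at a single point.
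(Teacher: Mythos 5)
Your proof is correct and is exactly the argument the paper intends: the corollary follows from Theorem \ref{h} by noting that $g(V,\xi)=0$ means $\eta(V)\equiv 0$ (since $\eta=i_{\xi}g$), hence $\xi(\eta(V))=0$, which collapses (\ref{s25}) to $\scal=-2n(2n+1)<0$ and the formula for $\overline{\lambda}$ to the negative constant $-\frac{2n}{a^2}$, making the soliton a genuine expanding Ricci soliton. Your care in noting that the orthogonality must hold identically on $M$ (so that the derivative term $\xi(\eta(V))$, not merely $\eta(V)$, vanishes) is precisely the one point where the deduction could otherwise go wrong.
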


\begin{theorem}\label{lll}
If $(V=\overline{\grad}(f),\overline{\lambda})$ defines a gradient almost Ricci soliton in a $(2n+1)$-dimensional $D$-homothetically deformed Kenmotsu manifold $(M,\overline{\phi},\overline{\xi},\overline{\eta},\overline{g})$, then:
\begin{equation}\label{lamx}
\overline{\lambda}=\frac{1}{a^2}\Hess(f)(\xi,\xi)-\frac{2n}{a^2}.
\end{equation}

\end{theorem}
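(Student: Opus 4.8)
The plan is to proceed exactly as in the proof of Theorem \ref{k}, but with the (simpler) Ricci soliton equation (\ref{1}) in place of its Riemann counterpart. Since $V=\overline{\grad}(f)$ gives $\frac{1}{2}\pounds_V\overline{g}=\overline{\Hess}(f)$, the defining equation (\ref{1}) for the deformed structure reads $\overline{\Hess}(f)+\overline{\Ric}=\overline{\lambda}\,\overline{g}$. First I would substitute the deformation formulas (\ref{he}) for $\overline{\Hess}(f)$, (\ref{333}) for $\overline{\Ric}$, and $\overline{g}=ag+a(a-1)\eta\otimes\eta$ from the definition of the deformation, turning this into a single identity of symmetric $(0,2)$-tensor fields expressed entirely through the undeformed data $\Hess(f)$, $\Ric$, $g$, $\eta\otimes\eta$ and the scalars $\xi(f)$, $\overline{\lambda}$.

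Next I would extract two scalar relations by taking two different traces of this identity. Tracing with respect to $g$ and using $\operatorname{tr}_g\Hess(f)=\Delta(f)$, $\operatorname{tr}_g\Ric=\scal$, $\operatorname{tr}_g g=2n+1$ and $\operatorname{tr}_g(\eta\otimes\eta)=1$ produces a first expression for $\scal$ in terms of $\overline{\lambda}$, $\Delta(f)$ and $\xi(f)$. Independently, tracing the soliton equation itself with respect to $\overline{g}$ gives the analogue of (\ref{sc}), namely $\overline{\scal}=(2n+1)\overline{\lambda}-\overline{\Delta}(f)$; converting $\overline{\scal}$ and $\overline{\Delta}(f)$ back to undeformed quantities via (\ref{s}) and (\ref{la}) yields a second expression for $\scal$, now also involving $\xi(\xi(f))$.

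Finally I would equate the two expressions for $\scal$. The $\Delta(f)$ and $\xi(f)$ contributions cancel, and after collecting terms the relation collapses to $a(a-1)\overline{\lambda}=\frac{a-1}{a}[\xi(\xi(f))-2n]$. For a genuine deformation $a\neq 1$, dividing by $a(a-1)$ and recalling $\xi(\xi(f))=\Hess(f)(\xi,\xi)$ gives precisely (\ref{lamx}). The only real obstacle is the bookkeeping: one must keep careful track of the coefficients of $g$ and of $\eta\otimes\eta$ (and of the first- and second-order terms in $f$) through both tracings, and it is exactly the delicate cancellation of the $\Delta(f)$ and $\xi(f)$ terms that renders the final relation linear in $\overline{\lambda}$ with the clean coefficient $a(a-1)$.
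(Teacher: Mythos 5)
Your proposal is correct and follows essentially the same route as the paper's own proof: substitute (\ref{333}), (\ref{he}) and $\overline{g}=ag+a(a-1)\eta\otimes\eta$ into $\overline{\Hess}(f)+\overline{\Ric}=\overline{\lambda}\,\overline{g}$, trace once w.r.t.\ $g$ and once w.r.t.\ $\overline{g}$ (converting via (\ref{s}) and (\ref{la})), and equate the two resulting expressions for $\scal$, where the $\Delta(f)$ and $\xi(f)$ terms indeed cancel to leave $a(a-1)\overline{\lambda}=\frac{a-1}{a}[\xi(\xi(f))-2n]$. Your explicit flagging of the division by $a-1$ (i.e.\ $a\neq 1$) is a point the paper leaves implicit, but otherwise the two arguments coincide step by step.
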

\begin{proof}
If $V=\overline{\grad}(f)$, then $\frac{1}{2}\pounds _{V}\overline{g}=\overline{\Hess}(f)$ and equation (\ref{1}) becomes
\begin{equation}\label{rrx}
\overline{\Hess}(f)+\overline{\Ric}=\overline{\lambda}\overline{g}.
\end{equation}

Using (\ref{333}) and (\ref{he}) in (\ref{rrx}) we get
\begin{equation}\label{ppx}
\Hess(f)+\Ric+\frac{a-1}{a}[2n-\xi(f)](g-\eta\otimes \eta)=\overline{\lambda}[ag+a(a-1)\eta\otimes \eta]\end{equation}
and tracing (\ref{ppx}) w.r.t. $g$, we obtain
\begin{equation}\label{oox}
\scal=a(2n+a)\overline{\lambda}-\Delta(f)+\frac{2n(a-1)}{a}\xi(f)-\frac{4n^2(a-1)}{a}.\end{equation}

Now tracing (\ref{rrx}) w.r.t. $\overline{g}$ we get
$$
\overline{\Delta}(f)+\overline{\scal}=(2n+1)\overline{\lambda}
$$
and using (\ref{s}) and (\ref{la}), we obtain
\begin{equation}\label{ooox}
\scal=a(2n+1)\overline{\lambda}-\Delta(f)+\frac{2n(a-1)}{a}\xi(f)+\frac{a-1}{a}\xi(\xi(f))-\frac{2n(2n+1)(a-1)}{a}.\end{equation}

Now, equating (\ref{oox}) and (\ref{ooox}), taking into account that $\xi(\xi(f))=\Hess(f)(\xi,\xi)$, we get (\ref{lamx}).

\end{proof}

\begin{example}
Consider the $3$-dimensional Kenmotsu manifold $(M, \phi,\xi,\eta,g)$, where $M=\{(x,y,z)\in\mathbb{R}^3, z>1\}$, with $(x,y,z)$ the standard coordinates in $\mathbb{R}^3$, and
$$\phi:= dx\otimes \frac{\partial}{\partial y}-dy\otimes \frac{\partial}{\partial x}, \ \ \xi:=\frac{\partial}{\partial z}, \ \ \eta:= dz, \ \ g:=e^{2z}(dx\otimes dx+dy\otimes dy)+dz\otimes dz.$$

Then the pair $(V=e^z\frac{\partial}{\partial z}, \lambda=e^z-2)$ defines a shrinking gradient almost Ricci soliton \cite{bl}, with
$V=\grad(f)$, for $f(x,y,z)=e^z$.

If $(\overline{V}=\overline{\grad}(f),\overline{\lambda})$ defines a gradient almost Ricci soliton in the $D$-homothetically deformed Kenmotsu manifold $(M,\overline{\phi},\overline{\xi},\overline{\eta},\overline{g})$, then
$$\overline{V}= \frac{1}{a^2}V, \ \ \overline{\lambda}=\frac{1}{a^2}\lambda.$$
\end{example}

\begin{corollary}
Under the hypotheses of Theorem \ref{lll}, if $V$ is $\overline{g}$-orthogonal to $\xi$, the Ricci soliton is expanding and we get:
\begin{equation}\label{as1}
\overline{\lambda}=-\frac{2n}{a^2},
\end{equation}
\begin{equation}\label{ds1}
\scal=-\Delta(f)-2n(2n+1).
\end{equation}
\end{corollary}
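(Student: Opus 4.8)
The plan is to reduce the $\overline{g}$-orthogonality hypothesis to a condition on $f$ alone, and then feed that condition into the formulas already obtained in the proof of Theorem \ref{lll}. First I would compute $\overline{g}(V,\xi)$ for $V=\overline{\grad}(f)$. Using $\overline{g}=ag+a(a-1)\eta\otimes\eta$ together with $\eta=i_\xi g$ and $\eta(\xi)=1$, one finds $\overline{g}(V,\xi)=a\eta(V)+a(a-1)\eta(V)=a^2\eta(V)$. On the other hand, inserting the explicit expression $\overline{\grad}(f)=\frac1a\grad(f)-\frac{a-1}{a^2}\xi(f)\xi$ and using $\eta(\grad(f))=g(\xi,\grad(f))=\xi(f)$ gives $\eta(V)=\frac1a\xi(f)-\frac{a-1}{a^2}\xi(f)=\frac{1}{a^2}\xi(f)$, so that $\overline{g}(V,\xi)=\xi(f)$. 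Hence the orthogonality condition $\overline{g}(V,\xi)=0$ is equivalent to the single scalar equation $\xi(f)=0$.

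The second step is to translate $\xi(f)=0$ into the vanishing of the relevant Hessian term. Since $\xi(f)\equiv 0$ on $M$, differentiating along $\xi$ gives $\xi(\xi(f))=0$, and because the Kenmotsu relation (\ref{kk}) yields $\nabla_\xi\xi=\xi-\eta(\xi)\xi=0$, we obtain $\Hess(f)(\xi,\xi)=\xi(\xi(f))-(\nabla_\xi\xi)f=0$. Substituting $\Hess(f)(\xi,\xi)=0$ into (\ref{lamx}) immediately produces $\overline{\lambda}=-\frac{2n}{a^2}$, which is negative for every admissible $n\geq 1$ and $a>0$; thus the soliton is expanding, which proves (\ref{as1}).

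For the scalar curvature I would use one of the two trace identities already derived in the proof of Theorem \ref{lll}, say (\ref{ooox}). Setting $\xi(f)=0$, $\xi(\xi(f))=0$ and $\overline{\lambda}=-2n/a^2$ there, the terms carrying $\xi(f)$ and $\xi(\xi(f))$ drop out, and the remaining contribution $a(2n+1)\overline{\lambda}=-\frac{2n(2n+1)}{a}$ combines with $-\frac{2n(2n+1)(a-1)}{a}$ to give exactly $-2n(2n+1)$, since the bracket $1+(a-1)$ cancels the $a$ in the denominator. This leaves $\scal=-\Delta(f)-2n(2n+1)$, which is (\ref{ds1}). As a consistency check one may run the same substitution through (\ref{oox}), where the numerator $2n(2n+a)+4n^2(a-1)$ over $a$ again simplifies to $2n(2n+1)$, confirming the same value.

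I expect the only genuinely non-routine point to be the first step, namely the identification $\overline{g}(V,\xi)=\xi(f)$: it is this clean collapse of the deformed orthogonality condition to the pointwise equation $\xi(f)=0$ that makes both displayed formulas fall out. Once that reduction is in hand, the substitutions into (\ref{lamx}) and (\ref{ooox}) are purely algebraic, and the only care needed is to verify that the $a$-dependent constants cancel to yield the stated $a$-independent scalar curvature.
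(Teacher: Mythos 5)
Your proposal is correct and follows exactly the route the paper intends for this corollary (which it states without explicit proof): reduce $\overline{g}$-orthogonality to $\xi(f)=0$, deduce $\Hess(f)(\xi,\xi)=\xi(\xi(f))=0$ using $\nabla_\xi\xi=0$, and substitute into (\ref{lamx}) and the trace identity (\ref{ooox}) (or (\ref{oox})), where the $a$-dependent terms cancel to give $\scal=-\Delta(f)-2n(2n+1)$. The only remark is that your ``non-routine'' first step is actually immediate: by the very definition of the gradient w.r.t.\ $\overline{g}$ one has $\overline{g}(\overline{\grad}(f),\xi)=\xi(f)$, so the explicit computation through the formula for $\overline{\grad}(f)$, while correct, is not needed.
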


Considering (\ref{rrx}) and computing the Hilbert-Schmidt norms, if $(V=\overline{\grad}(f),\overline{\lambda})$ defines a gradient almost Ricci soliton in a $(2n+1)$-dimensional $D$-homothetically deformed Kenmotsu manifold $(M,\overline{\phi},\overline{\xi},\overline{\eta},\overline{g})$, then
$$
|\overline{\Hess}(f)|_{\overline{g}}^2=|\overline{\Ric}|_{\overline{g}}^2-(2n+1)\overline{\lambda}^2+2\overline{\Delta}(f)\overline{\lambda}
$$
and imposing the existence condition on $\overline{\lambda}$, we get, in this case, the same estimation (\cite{blcr}, \cite{cr}):

\begin{theorem}
If $(V=\overline{\grad}(f),\overline{\lambda})$ defines a gradient almost Ricci soliton in a $(2n+1)$-dimensional $D$-homothetically deformed Kenmotsu manifold $(M,\overline{\phi},\overline{\xi},\overline{\eta},\overline{g})$, then
\begin{equation}\label{m}
|\overline{\Ric}|_{\overline{g}}^2\geq |\overline{\Hess}(f)|_{\overline{g}}^2-\frac{1}{2n+1}(\overline{\Delta}(f))^2.
\end{equation}
\end{theorem}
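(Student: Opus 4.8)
The plan is to treat the norm identity displayed immediately before the statement,
\[
|\overline{\Hess}(f)|_{\overline{g}}^2=|\overline{\Ric}|_{\overline{g}}^2-(2n+1)\overline{\lambda}^2+2\overline{\Delta}(f)\,\overline{\lambda},
\]
as a pointwise quadratic relation in the soliton potential $\overline{\lambda}$, and to extract (\ref{m}) from the fact that $\overline{\lambda}$ is a genuine real-valued function. First I would recall how this identity is produced, since it is the only input: from the gradient almost Ricci soliton equation (\ref{rrx}) one has $\overline{\Hess}(f)=\overline{\lambda}\,\overline{g}-\overline{\Ric}$, so squaring in the Hilbert-Schmidt norm w.r.t. $\overline{g}$ gives $|\overline{\Hess}(f)|_{\overline{g}}^2=\overline{\lambda}^2|\overline{g}|_{\overline{g}}^2-2\overline{\lambda}\,\langle\overline{g},\overline{\Ric}\rangle_{\overline{g}}+|\overline{\Ric}|_{\overline{g}}^2$; inserting $|\overline{g}|_{\overline{g}}^2=2n+1$, $\langle\overline{g},\overline{\Ric}\rangle_{\overline{g}}=\overline{\scal}$, and the trace of (\ref{rrx}), namely $\overline{\scal}=(2n+1)\overline{\lambda}-\overline{\Delta}(f)$, reproduces the displayed relation.

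Next I would rearrange this identity, at each point of $M$, into
\[
(2n+1)\overline{\lambda}^2-2\overline{\Delta}(f)\,\overline{\lambda}+\bigl[|\overline{\Hess}(f)|_{\overline{g}}^2-|\overline{\Ric}|_{\overline{g}}^2\bigr]=0,
\]
a quadratic equation in $\overline{\lambda}$ with positive leading coefficient $2n+1$. The decisive observation is that $\overline{\lambda}$ is, by hypothesis, a real smooth function, so at every point this quadratic admits a real root; hence its discriminant is nonnegative,
\[
4(\overline{\Delta}(f))^2-4(2n+1)\bigl[|\overline{\Hess}(f)|_{\overline{g}}^2-|\overline{\Ric}|_{\overline{g}}^2\bigr]\ge 0,
\]
and dividing by $4(2n+1)>0$ and transposing terms yields exactly (\ref{m}).

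A cleaner packaging of the same computation, which I would probably prefer in the write-up, is to complete the square directly: transposing the norm identity gives $|\overline{\Ric}|_{\overline{g}}^2-|\overline{\Hess}(f)|_{\overline{g}}^2=(2n+1)\overline{\lambda}^2-2\overline{\Delta}(f)\,\overline{\lambda}$, whence
\[
|\overline{\Ric}|_{\overline{g}}^2-|\overline{\Hess}(f)|_{\overline{g}}^2+\frac{1}{2n+1}(\overline{\Delta}(f))^2=\frac{1}{2n+1}\bigl[(2n+1)\overline{\lambda}-\overline{\Delta}(f)\bigr]^2\ge 0.
\]
This immediately gives (\ref{m}) and, as a bonus, exhibits the equality case: it holds precisely when $(2n+1)\overline{\lambda}=\overline{\Delta}(f)$, that is, when $\overline{\scal}=0$.

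I do not expect a genuine obstacle here; the argument is entirely algebraic once the two standard norm evaluations $|\overline{g}|_{\overline{g}}^2=2n+1$ and $\langle\overline{g},\overline{\Ric}\rangle_{\overline{g}}=\overline{\scal}$ are available. The one point that requires care is conceptual rather than computational: the inequality is not an identity, and the step that converts the algebraic relation into a bound is exactly the reality (existence) of the potential $\overline{\lambda}$ at each point, which forces either a nonnegative discriminant or, equivalently, a nonnegative perfect square. This also makes transparent that the estimate is sharp.
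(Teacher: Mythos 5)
Your proof is correct and takes essentially the same route as the paper: the paper derives the identical norm identity from the soliton equation (\ref{rrx}) and then ``imposes the existence condition on $\overline{\lambda}$,'' which is exactly your discriminant (equivalently, completed-square) argument made explicit. Your closing observation that equality holds precisely when $(2n+1)\overline{\lambda}=\overline{\Delta}(f)$, i.e.\ when $\overline{\scal}=0$, is a small sharpening the paper does not state.
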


\begin{corollary}
If $(V=\overline{\grad}(f),\overline{\lambda})$ defines a solenoidal gradient almost Ricci soliton in a $(2n+1)$-dimensional $D$-homothetically deformed Kenmotsu ma\-ni\-fold $(M,\overline{\phi},\overline{\xi},\overline{\eta},\overline{g})$, then
\begin{equation}
|\overline{\Ric}|_{\overline{g}}^2\geq |\overline{\Hess}(f)|_{\overline{g}}^2.
\end{equation}
\end{corollary}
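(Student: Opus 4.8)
The plan is to obtain this inequality as an immediate specialization of the preceding Theorem, whose conclusion (\ref{m}) asserts
$$|\overline{\Ric}|_{\overline{g}}^2\geq |\overline{\Hess}(f)|_{\overline{g}}^2-\frac{1}{2n+1}(\overline{\Delta}(f))^2$$
for any gradient almost Ricci soliton $(V=\overline{\grad}(f),\overline{\lambda})$. Since the present hypothesis differs only by the additional solenoidal assumption on $V$, the entire task reduces to translating ``solenoidal'' into a statement about $\overline{\Delta}(f)$ and substituting into this estimate.

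First I would recall that a vector field is called solenoidal precisely when it is divergence-free, so the hypothesis means $\overline{\div}(V)=0$. The key observation is that for a gradient field the divergence coincides with the Laplacian of the potential: writing $V=\overline{\grad}(f)$ and using the identity $\overline{\Delta}(f)=\overline{\div}(\overline{\grad}(f))$, the solenoidal condition becomes exactly $\overline{\Delta}(f)=0$. Should one prefer to verify this at the level of the deformed operators rather than invoke it abstractly, it follows directly from the first Proposition: applying $\overline{\div}=\div$ to $\overline{\grad}(f)=\frac{1}{a}\grad(f)-\frac{a-1}{a^2}\xi(f)\xi$ and using $\div(\xi)=2n$ reproduces the expression (\ref{la}) for $\overline{\Delta}(f)$, confirming $\overline{\div}(\overline{\grad}(f))=\overline{\Delta}(f)$.

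With $\overline{\Delta}(f)=0$ established, I would substitute this value into (\ref{m}); the term $\frac{1}{2n+1}(\overline{\Delta}(f))^2$ vanishes and the inequality collapses to
$$|\overline{\Ric}|_{\overline{g}}^2\geq |\overline{\Hess}(f)|_{\overline{g}}^2,$$
which is the claim. I do not expect any genuine obstacle here, as the argument is a one-step specialization of an already proven estimate; the only point deserving any care is the elementary but essential identification of the solenoidal hypothesis with the vanishing of the deformed Laplacian $\overline{\Delta}(f)$.
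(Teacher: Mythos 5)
Your proof is correct and follows exactly the route the paper intends: the corollary is an immediate specialization of the estimate (\ref{m}), since the solenoidal hypothesis $\overline{\div}(V)=0$ for $V=\overline{\grad}(f)$ means $\overline{\Delta}(f)=\overline{\div}(\overline{\grad}(f))=0$, killing the term $\frac{1}{2n+1}(\overline{\Delta}(f))^2$. Your side-check that $\div$ applied to $\overline{\grad}(f)$ recovers (\ref{la}) is also accurate, so there is nothing to add.
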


Replacing now $\overline{\Ric}$, $\overline{\Hess}(f)$ and $\overline{\Delta}$ from (\ref{333}), (\ref{he}) and (\ref{la}) in (\ref{m}), we obtain:
\begin{proposition}\label{hhah1}
If $(V=\overline{\grad}(f),\overline{\lambda})$ defines a gradient almost Ricci soliton in a $(2n+1)$-dimensional $D$-homothetically deformed Kenmotsu manifold $(M,\overline{\phi},\overline{\xi},\overline{\eta},\overline{g})$, then
\begin{equation}
|\Ric|_{g}^2\geq |{\Hess}(f)|_{g}^2
-4n\frac{a-1}{a}\scal-4n^2(2n+1)\left(\frac{a-1}{a}\right)^2-\frac{1}{2n+1}({\Delta}(f))^2-\end{equation}
$$-\frac{2}{2n+1}\frac{a-1}{a}[\xi(f)-\xi(\xi(f))]\Delta(f)+\frac{2(2n+a)(a-1)}{(2n+1)a^2}\xi(f)\cdot \xi(\xi(f))+$$$$
+\frac{2n}{2n+1}\left(\frac{a-1}{a}\right)^2(\xi(f))^2-\frac{2(n+na+a)(a-1)}{(2n+1)a^2}(\xi(\xi(f)))^2.$$
\end{proposition}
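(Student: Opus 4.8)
The plan is to treat the claimed inequality as a pure substitution consequence of the already-established estimate (\ref{m}), namely $|\overline{\Ric}|_{\overline{g}}^2\geq |\overline{\Hess}(f)|_{\overline{g}}^2-\frac{1}{2n+1}(\overline{\Delta}(f))^2$, and to rewrite each of the three barred quantities entirely in terms of the undeformed data $\Ric$, $\Hess(f)$, $\Delta(f)$, $\scal$, $\xi(f)$ and $\xi(\xi(f))$. The only analytic content is (\ref{m}) itself; everything downstream is algebra driven by the deformation formulas (\ref{333}), (\ref{he}), (\ref{la}), the norm–transformation rule (\ref{par}), and the table of $g$-inner products recorded just before Proposition 2.

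First I would compute the two Hilbert–Schmidt norms appearing in (\ref{m}). Applying (\ref{par}) to the symmetric tensor $\overline{\Ric}=\Ric+\frac{2n(a-1)}{a}(g-\eta\otimes\eta)$ requires $|\overline{\Ric}|_g^2$ and $\overline{\Ric}(\xi,\xi)$. Expanding the square and using $\langle g,\Ric\rangle_g=\scal$, $\langle\Ric,\eta\otimes\eta\rangle_g=-2n$ together with $|g-\eta\otimes\eta|_g^2=2n$ expresses $|\overline{\Ric}|_g^2$ as a polynomial in $|\Ric|_g^2$ and $\scal$; and since $(g-\eta\otimes\eta)(\xi,\xi)=0$ one gets $\overline{\Ric}(\xi,\xi)=\Ric(\xi,\xi)=-2n$. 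The identical recipe applied to $\overline{\Hess}(f)=\Hess(f)-\frac{a-1}{a}\xi(f)(g-\eta\otimes\eta)$, now using $\langle g,\Hess(f)\rangle_g=\Delta(f)$ and $\langle\Hess(f),\eta\otimes\eta\rangle_g=\xi(\xi(f))$, yields $|\overline{\Hess}(f)|_{\overline{g}}^2$, with $\overline{\Hess}(f)(\xi,\xi)=\xi(\xi(f))$.

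Next I would insert the explicit expression (\ref{la}) for $\overline{\Delta}(f)$ and square it, generating the monomials $(\Delta(f))^2$, $(\xi(f))^2$, $(\xi(\xi(f)))^2$ and the three cross terms. Substituting the three computed quantities into (\ref{m}) and clearing the common factor $\frac{1}{a^2}$ that multiplies both $|\Ric|_g^2$ and $|\Hess(f)|_g^2$ (so that each emerges with coefficient $1$) leaves the stated inequality after collecting like terms; in particular the $\scal$-coefficient $-4n\frac{a-1}{a}$ arises directly from the $\frac{4n(a-1)}{a}\scal$ contribution to $|\overline{\Ric}|_g^2$.

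I expect the only genuine difficulty to be the bookkeeping in this final collection step, specifically isolating the pure $(\xi(\xi(f)))^2$ coefficient. That monomial receives contributions both from the correction $-\frac{a^2-1}{a^4}[\overline{\Hess}(f)(\xi,\xi)]^2$ inside $|\overline{\Hess}(f)|_{\overline{g}}^2$ and from the square of the final term of (\ref{la}) inside $-\frac{1}{2n+1}(\overline{\Delta}(f))^2$; reconciling these is exactly what produces the factor $(n+na+a)$ in the coefficient $-\frac{2(n+na+a)(a-1)}{(2n+1)a^2}$. The same care applies to the cross term $\xi(f)\cdot\xi(\xi(f))$, whose coefficient $\frac{2(2n+a)(a-1)}{(2n+1)a^2}$ again blends the $\overline{\Hess}$-expansion with the cross terms of $(\overline{\Delta}(f))^2$. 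No new geometric input beyond these substitutions is needed.
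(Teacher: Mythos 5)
Your proposal is correct and takes essentially the same route as the paper, which obtains Proposition \ref{hhah1} precisely by substituting (\ref{333}), (\ref{he}) and (\ref{la}) into the inequality (\ref{m}) and converting the $\overline{g}$-norms to $g$-norms via (\ref{par}) and the listed inner-product identities. The details you single out (e.g.\ $\overline{\Ric}(\xi,\xi)=-2n$, $\overline{\Hess}(f)(\xi,\xi)=\xi(\xi(f))$, $|g-\eta\otimes\eta|_g^2=2n$, and the merging of the $(\ref{par})$-correction with the square of the last term of (\ref{la}) to produce the coefficients of $(\xi(\xi(f)))^2$ and $\xi(f)\cdot\xi(\xi(f))$) all check out against the stated coefficients.
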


\begin{corollary}
Under the hypotheses of Proposition \ref{hhah1}, if $V$ is $\overline{g}$-orthogonal to $\xi$, we get
\begin{equation}
|\Ric|_{g}^2\geq |{\Hess}(f)|_{g}^2-\frac{1}{2n+1}({\Delta}(f))^2+\frac{4n(a-1)}{a}{\Delta}(f)+\frac{4n^2(2n+1)(a^2-1)}{a^2}.
\end{equation}

Moreover, if $f$ is a $\Delta$-harmonic function, then
$$|\Ric|_{g}^2\geq |{\Hess}(f)|_{g}^2+\frac{4n^2(2n+1)(a^2-1)}{a^2}.$$
\end{corollary}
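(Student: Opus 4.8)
The plan is to obtain this corollary by feeding the orthogonality hypothesis, together with the scalar-curvature formula already recorded for this case, into the inequality of Proposition \ref{hhah1}; no new curvature identity is needed. First I would unpack what ``$V$ is $\overline{g}$-orthogonal to $\xi$'' means for $V=\overline{\grad}(f)$: since the $\overline{g}$-gradient satisfies $\overline{g}(\overline{\grad}(f),X)=X(f)$ for every $X$, orthogonality to $\xi$ is precisely $\xi(f)=0$ as a function on $M$. Because $\xi(f)$ then vanishes identically, so does its $\xi$-derivative, giving $\xi(\xi(f))=0$; equivalently, using $\nabla_\xi\xi=0$ in a Kenmotsu manifold (a consequence of (\ref{kk})), $\Hess(f)(\xi,\xi)=\xi(\xi(f))=0$.

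Next I would substitute $\xi(f)=0$ and $\xi(\xi(f))=0$ into the inequality of Proposition \ref{hhah1}. Every summand there carrying a factor of $\xi(f)$, $(\xi(f))^2$, $\xi(\xi(f))$ or $(\xi(\xi(f)))^2$, as well as the cross term $\xi(f)\cdot\xi(\xi(f))$, drops out, leaving
$$|\Ric|_{g}^2\geq |{\Hess}(f)|_{g}^2-4n\frac{a-1}{a}\scal-4n^2(2n+1)\left(\frac{a-1}{a}\right)^2-\frac{1}{2n+1}({\Delta}(f))^2.$$

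Then I would insert the value $\scal=-\Delta(f)-2n(2n+1)$ coming from the orthogonal gradient Ricci case, namely the earlier corollary to Theorem \ref{lll}. The scalar-curvature term becomes $4n\frac{a-1}{a}\Delta(f)+8n^2(2n+1)\frac{a-1}{a}$, and the two purely $a$-dependent constants combine, via $2-\frac{a-1}{a}=\frac{a+1}{a}$, as $8n^2(2n+1)\frac{a-1}{a}-4n^2(2n+1)\left(\frac{a-1}{a}\right)^2=4n^2(2n+1)\frac{a-1}{a}\cdot\frac{a+1}{a}=4n^2(2n+1)\frac{a^2-1}{a^2}$, which yields the first displayed inequality. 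For the ``Moreover'' part I would simply set $\Delta(f)=0$.

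The computation is entirely mechanical, so the only real care-points, rather than genuine obstacles, are twofold: first, recognising that $\overline{g}$-orthogonality forces $\xi(f)$ to vanish \emph{identically}, so that its $\xi$-derivative is also zero and the many $\xi$-dependent terms in Proposition \ref{hhah1} genuinely disappear; and second, drawing $\scal$ from the correct earlier corollary (the one attached to Theorem \ref{lll} in the Ricci setting, not its Riemann analogue with its extra $(2n-1)$ factors). The small algebraic flourish producing the factorised constant $4n^2(2n+1)\frac{a^2-1}{a^2}$ is the step I would double-check most carefully.
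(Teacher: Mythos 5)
Your proposal is correct and follows the same route the paper intends: the $\overline{g}$-orthogonality of $\overline{\grad}(f)$ to $\xi$ gives $\xi(f)=0$ identically, hence $\xi(\xi(f))=0$, which kills all the $\xi$-dependent terms in Proposition \ref{hhah1}, and substituting $\scal=-\Delta(f)-2n(2n+1)$ from (\ref{ds1}) yields the stated inequality, the constants combining exactly as you computed to $\frac{4n^2(2n+1)(a^2-1)}{a^2}$. The $\Delta$-harmonic case then follows by setting $\Delta(f)=0$, just as in the paper.
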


\begin{proposition}
If $(V=\overline{\grad}(f),\overline{\lambda})$ defines a solenoidal gradient almost Ricci soliton in a $(2n+1)$-dimensional $D$-homothetically deformed Kenmotsu manifold $(M,\overline{\phi},\overline{\xi},\overline{\eta},\overline{g})$, then
$$
|\Ric|_{g}^2\geq |{\Hess}(f)|_{g}^2+\frac{a^2-1}{a^2}[4n^2-(\xi(\xi(f)))^2].$$
\end{proposition}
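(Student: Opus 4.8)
The plan is to obtain the estimate by pushing the solenoidal soliton norm inequality down to the initial metric. The Corollary immediately preceding this statement gives, for a solenoidal gradient almost Ricci soliton, the bound $|\overline{\Ric}|_{\overline{g}}^2\geq|\overline{\Hess}(f)|_{\overline{g}}^2$ (it is \ref{m} specialised to $\overline{\Delta}(f)=0$). So the whole task is to rewrite both Hilbert--Schmidt norms, currently taken with respect to $\overline{g}$, in terms of $g$-quantities and then to read off the claimed inequality.

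First I would record the two $\xi$-direction values that feed the defect term of the conversion formula \ref{par}. Since $(g-\eta\otimes\eta)(\xi,\xi)=0$, formula \ref{333} gives $\overline{\Ric}(\xi,\xi)=\Ric(\xi,\xi)=-2n$; and since $\nabla_\xi\xi=0$ by \ref{kk}, formula \ref{he} gives $\overline{\Hess}(f)(\xi,\xi)=\Hess(f)(\xi,\xi)=\xi(\xi(f))$. Applying \ref{par} to $T=\overline{\Ric}$ and to $T=\overline{\Hess}(f)$ then converts $|\overline{\Ric}|_{\overline{g}}^2\geq|\overline{\Hess}(f)|_{\overline{g}}^2$ into a comparison of $g$-norms, and after clearing the common factor $\frac1{a^2}$ the two defect contributions combine into exactly the term $\frac{a^2-1}{a^2}[4n^2-(\xi(\xi(f)))^2]$ appearing on the right-hand side of the assertion.

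The step I expect to be the main obstacle is the reconciliation of the deformed tensors with the initial ones. Expanding $|\overline{\Ric}|_g^2$ through \ref{333} produces, beyond $|\Ric|_g^2$, cross terms carrying $\scal$ (via $\langle\Ric,g-\eta\otimes\eta\rangle_g=\scal+2n$) together with a constant, while expanding $|\overline{\Hess}(f)|_g^2$ through \ref{he} produces terms in $\xi(f)$ and $\Delta(f)$. To arrive at the clean stated form these contributions must be removed, which forces one to invoke the solenoidal constraint $\overline{\Delta}(f)=0$, equivalently $\Delta(f)=\frac{a-1}{a}[2n\xi(f)+\xi(\xi(f))]$ by \ref{la}, together with the scalar-curvature identity $\scal=(2n+1)\big[\tfrac1a\xi(\xi(f))-2n\big]$ that a solenoidal gradient almost Ricci soliton inherits from Theorem \ref{h}. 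Verifying that every $\xi(f)$- and $\scal$-dependent term cancels is the delicate bookkeeping, and I expect it to run entirely parallel to the computation behind the analogous solenoidal almost Riemann estimate established earlier.
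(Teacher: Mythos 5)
Your first two paragraphs are correct, and they are precisely the computation that underlies the paper's statement: the values $\overline{\Ric}(\xi,\xi)=\Ric(\xi,\xi)=-2n$ and $\overline{\Hess}(f)(\xi,\xi)=\Hess(f)(\xi,\xi)=\xi(\xi(f))$ are right, and feeding them into (\ref{par}) turns the solenoidal corollary $|\overline{\Ric}|_{\overline{g}}^2\geq|\overline{\Hess}(f)|_{\overline{g}}^2$ into
\begin{equation*}
|\overline{\Ric}|_{g}^2\geq |\overline{\Hess}(f)|_{g}^2+\frac{a^2-1}{a^2}\bigl[4n^2-(\xi(\xi(f)))^2\bigr],
\end{equation*}
i.e.\ the claimed inequality for the \emph{deformed} tensors. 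The paper's own (implicit) derivation stops exactly here, the bars being dropped silently; the stated proposition is this display with $\overline{\Ric}$, $\overline{\Hess}(f)$ replaced by $\Ric$, $\Hess(f)$.

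The genuine gap is in your third paragraph: the cancellation you defer to ``delicate bookkeeping'' does not take place, so your route cannot remove the bars. Expanding via (\ref{333}) and (\ref{he}), using $\langle\Ric,g-\eta\otimes\eta\rangle_g=\scal+2n$ and $|g-\eta\otimes\eta|_g^2=2n$, and then inserting the two constraints you correctly identified, $\Delta(f)=\frac{a-1}{a}[2n\xi(f)+\xi(\xi(f))]$ and $\scal=(2n+1)\bigl[\frac{1}{a}\xi(\xi(f))-2n\bigr]$, one gets the two residuals
\begin{equation*}
|\overline{\Ric}|_{g}^2-|\Ric|_{g}^2=\frac{4n(2n+1)(a-1)}{a^2}\,\xi(\xi(f))-\frac{8n^3(a^2-1)}{a^2},
\end{equation*}
\begin{equation*}
|\overline{\Hess}(f)|_{g}^2-|\Hess(f)|_{g}^2=-\frac{2n(a-1)^2}{a^2}(\xi(f))^2+\frac{2(a-1)}{a^2}\,\xi(f)\,\xi(\xi(f)),
\end{equation*}
which are different functions: the $\scal$-terms can indeed be traded for $\xi(\xi(f))$-terms, but the $\xi(f)$-terms on the Hessian side have no counterpart on the Ricci side. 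To pass from the barred inequality to the unbarred one you would need the second residual minus the first to be non-negative, and that difference is, pointwise, a quadratic in $\xi(f)$ whose leading coefficient $-\frac{2n(a-1)^2}{a^2}$ is negative for $a\neq 1$; so it is not an identity and cannot be discarded. Hence your argument, completed honestly, proves only the barred statement (plus the messy unbarred inequality carrying the residuals, in the spirit of Proposition \ref{hhah1}, where the paper does keep all the extra $\scal$-, $\xi(f)$- and $\Delta(f)$-terms); your expectation that the solenoidal Riemann case ``runs parallel'' is accurate, but it is parallel in suffering the same defect, and the clean unbarred form cannot be reached from the trace identities you invoke.
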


\small{

\bigskip

\textit{Adara M. Blaga}

\textit{Department of Mathematics}

\textit{West University of Timi\c{s}oara}

\textit{Bld. V. P\^{a}rvan nr. 4, 300223, Timi\c{s}oara, Rom\^{a}nia}

\textit{adarablaga@yahoo.com}

}
\end{document}